\newcommand{\R}{\mathbb R}
\newcommand{\C}{\mathbb C}
\newcommand{\Z}{\mathbb Z}
\newcommand{\mc}{\mathcal}
\newcommand{\End}{\operatorname{End}}
\newcommand{\id}{\operatorname{id}}
\newcommand{\grad}{\operatorname{grad}}
\newcommand{\GL}{\operatorname{GL}}
\newcommand{\SO}{\operatorname{SO}}
\newcommand{\Spin}{\operatorname{Spin}}
\newcommand{\Diff}{\operatorname{Diff}}
\newcommand{\vol}{\operatorname{vol}}
\newcommand{\tr}{\operatorname{tr}}
\newcommand{\im}{\operatorname{im}}
\newtheoremstyle{break}% name
  {}%         Space above, empty = `usual value'
  {}%         Space below
  {\itshape}% Body font
  {}%         Indent amount (empty = no indent, \parindent = para indent)
  {\bfseries}% Thm head font
  {.}%        Punctuation after thm head
  {\newline}% Space after thm head: \newline = linebreak
  {}%         Thm head spec
\theoremstyle{break}
\newtheorem{thm}[equation]{Theorem}
\newtheorem{prop}[equation]{Proposition}
\newtheorem{cor}[equation]{Corollary}
\newtheorem{lemma}[equation]{Lemma}
\theoremstyle{definition}
\begin{document}
\title{Stability of the spinor flow}
\author{Lothar Schiemanowski}
\address{Christian-Albrechts-Universität zu Kiel, Mathematisches Seminar, Ludewig-Meyn-Straße 4, Kiel}
\email{schiemanowski@math.uni-kiel.de}
\begin{abstract}
  We show stability of pairs of Ricci flat metrics and parallel spinor fields with respect to the spinor flow, i.e. we show that the spinor flow with initial conditions near such pairs converges to a critical point with exponential speed.
  Moreover, we show stability of certain volume constrained critical points of the spinorial energy.
\end{abstract}
\maketitle

\section{Introduction}
Given a spin manifold $M$ of dimension $n$, we consider the universal spinor bundle $\Sigma M$. This is the bundle whose sections consist of pairs of metrics $g \in \Gamma(\odot^2_+ T^*M)$ and spinor fields $\varphi \in \Gamma(\Sigma_g M)$.
We denote by $\mc{N}$ the set
$$\{ (g, \varphi) \in \Gamma( \Sigma M) : |\varphi| = 1\}$$
and define the spinorial energy functional
$$\mc{E}: \mc{N} \to \R$$
$$\mc{E}(g, \varphi) = \frac{1}{2}\int_M |\nabla^g \varphi|^2 \vol_g.$$
If the dimension of $M$ is at least three, the only critical points of $\mc{E}$ are absolute minimizers. This implies $\nabla^g \varphi = 0$ for a critical point $(g, \varphi)$, i.e. $\varphi$ is a parallel spinor with respect to the metric $g$. Existence of parallel spinors is a strong constraint on the metric $g$. Indeed, such a metric is necessarily Ricci flat and of special holonomy. Conversely, Ricci flat manifolds with special holonomy admit a parallel spinor.  Given that manifolds with such metrics are difficult to construct, it is natural to consider the negative gradient flow
$$\partial_t \Phi_t = Q(\Phi_t)$$
of $\mc{E}$ to find such a metric. Here $Q: \mc{N} \to T \mc{N}$ is the negative gradient of $\mc{E}$ with respect to the natural $L^2$ metric on $\mc{N}$. It turns out that $Q$ is weakly elliptic and has negative symbol.
The spinorial energy and the associated negative gradient flow, called spinor flow, were first examined in \cite{Ammann2015}. There, short time existence of this flow on closed manifolds was established.
From here on we assume $M$ to be a closed manifold and $\dim M = n \geq 3$.
We will prove that critical points of $\mc{E}$, i.e. pairs of Ricci flat metrics and parallel spinor fields, are stable with respect to the spinor flow, that is:
\begin{thm}
Suppose $\bar{\Phi} = (\bar{g}, \bar{\varphi})$ is a critical point of $\mc{E}$ and suppose $\bar{g}$ has no Killing fields. Then there exists a $C^{\infty}$ neighborhood $U$ of $\bar{\Phi}$, such that a solution of the negative gradient flow $\Phi_t$ with initial condition $\Phi_0 = \Phi$ smoothly converges to a critical point. In any $C^k$ norm the speed of convergence is exponential.
\end{thm}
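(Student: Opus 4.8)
The strategy is a Łojasiewicz--Simon argument adapted to the gauge-invariant functional $\mc{E}$, combined with parabolic smoothing. First I would analyze the linearization $L = DQ(\bar{\Phi})$ of the flow at $\bar{\Phi}$. Since $Q$ is the negative $L^2$-gradient of $\mc{E}$, the operator $L$ is formally self-adjoint and equals $-\Hess_{\bar{\Phi}}\mc{E}$; since $Q$ is weakly elliptic with negative symbol, $L$ is a non-positive, weakly elliptic operator. Its kernel contains the tangent space to the critical set of $\mc{E}$ through $\bar{\Phi}$, which splits into two pieces: the infinitesimal action of the $\Spin$-diffeomorphism group (the degeneracy responsible for the weak ellipticity) and a finite-dimensional space of \emph{essential} deformations of the pair $(\bar{g}, \bar{\varphi})$ inside the set of Ricci-flat metrics carrying parallel spinors. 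The hypothesis that $\bar{g}$ has no Killing fields guarantees that the gauge orbit through $\bar{\Phi}$ is an embedded submanifold admitting a slice; on the slice $L$ becomes genuinely elliptic, which is where a DeTurck-type modification enters.

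The crucial structural input is that the critical set of $\mc{E}$ near $\bar{\Phi}$ is a smooth manifold whose tangent space at $\bar{\Phi}$ is exactly $\ker L$ --- i.e.\ every infinitesimal critical deformation integrates to an honest curve of critical points. This is the unobstructedness of deformations of Ricci-flat metrics with parallel spinors, which I would import from the deformation theory of such structures (Wang; Dai--Wang--Wei; and its formulation on the universal spinor bundle), and it is precisely the integrability condition forcing the optimal Łojasiewicz exponent $\theta = \tfrac12$. Combined with a spectral gap for $L$ transverse to its kernel --- on the metric factor the linear stability of parallel-spinor metrics in the sense of Dai--Wang--Wei, on the spinor factor the elementary fact that the kernel of the spinorial connection Laplacian consists of parallel spinors --- and with the real-analyticity of $\mc{E}$ in suitable coordinates, the abstract Łojasiewicz--Simon theorem for analytic gauge-invariant functionals yields, on a slice transverse to the gauge orbit, an inequality $|\mc{E}(\Phi)|^{1/2} = |\mc{E}(\Phi) - \mc{E}(\bar{\Phi})|^{1/2} \le C \, \|Q(\Phi)\|_{L^2}$ for all $\Phi$ in a $C^\infty$ neighborhood of $\bar{\Phi}$ (using $\mc{E}(\bar{\Phi}) = 0$ since $\nabla^{\bar{g}}\bar{\varphi}=0$).

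With this in hand the convergence argument is standard. Using short-time existence from \cite{Ammann2015} together with uniform interior parabolic estimates on a neighborhood of $\bar{\Phi}$, I would first show that a solution starting sufficiently close to $\bar{\Phi}$ cannot leave that neighborhood: the energy identity $\tfrac{d}{dt}\mc{E}(\Phi_t) = -\|Q(\Phi_t)\|_{L^2}^2$ together with the Łojasiewicz inequality bounds $\int \|\partial_t \Phi_t\|_{L^2}\,dt$ by a power of the initial energy, and interior estimates upgrade this to control in every $C^k$, so shrinking the neighborhood makes the total displacement arbitrarily small, whence the solution exists for all time and stays near $\bar{\Phi}$. The differential inequality $-\tfrac{d}{dt}\mc{E}(\Phi_t)^{1/2} \ge c\,\|\partial_t\Phi_t\|_{L^2}$ then gives $\int_0^\infty \|\partial_t\Phi_t\|_{L^2}\,dt < \infty$, hence $L^2$-convergence of $\Phi_t$ to a limit $\Phi_\infty$, necessarily a critical point; parabolic smoothing promotes this to smooth convergence. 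Finally, since $\theta = \tfrac12$, the energy decays exponentially, $\mc{E}(\Phi_t) \le C e^{-\delta t}$, and feeding this back through the Łojasiewicz inequality and the interior estimates yields $\|\Phi_t - \Phi_\infty\|_{C^k} \le C_k e^{-\delta_k t}$ for every $k$.

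I expect the main obstacle to be two intertwined points: (i) establishing that the critical set is a manifold with tangent space exactly $\ker L$ --- the integrability/unobstructedness needed for the sharp exponent --- and (ii) handling the gauge degeneracy, namely constructing a good slice for the $\Spin$-diffeomorphism action from the no-Killing-fields hypothesis, verifying that $Q$ restricted to the slice (or its DeTurck modification) is strictly parabolic, and checking that trajectories of the original spinor flow are tracked by trajectories of the gauge-fixed flow, so that the Łojasiewicz argument carried out on the slice transfers back to the assertion about the honest flow. The remaining estimates --- the precise form of $L$, its self-adjointness and symbol, and the parabolic regularity bounds --- are routine once the framework of \cite{Ammann2015} is in place.
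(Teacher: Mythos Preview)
Your overall strategy matches the paper's: Łojasiewicz--Simon with optimal exponent (using that the critical set is a smooth manifold, which the paper imports from \cite{Ammann2015b}), a slice for the spin-diffeomorphism action built from the no-Killing-fields hypothesis, a DeTurck gauge to obtain strong parabolicity, and then energy decay plus parabolic smoothing. The structural inputs you name are exactly the ones the paper uses.

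There is, however, one step you treat as routine that is not, and the paper devotes an entire section to it. You write that ``interior estimates upgrade [the $L^2$ bound on $\partial_t\Phi_t$] to control in every $C^k$''. This does not work directly: the ungauged equation $\partial_t Q_t = DQ(\Phi_t)Q_t$ is only \emph{weakly} parabolic, so standard interior estimates do not apply to $Q(\Phi_t)$. If instead you pass to the gauged flow $\tilde\Phi_t = F_t^*\Phi_t$, whose linearization \emph{is} strongly elliptic, the velocity is $\tilde Q(\tilde\Phi_t) = F_t^*\bigl(\tilde{\mc L}_{X_t}\Phi_t + \dot\Phi_t\bigr)$ with $X_t=\dot f_t$; the Łojasiewicz inequality controls $\dot\Phi_t = Q(\Phi_t)$ in $L^2$, but gives no information about the extra term $\tilde{\mc L}_{X_t}\Phi_t$ coming from the drift of the DeTurck diffeomorphisms. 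Equivalently, $\tilde Q = Q + \lambda^* X_{\bar g}(g)$ and the second summand is \emph{not} governed by the energy.

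The paper closes this gap with a separate \emph{mapping flow estimate} (its Lemma~\ref{MFE}): the family $f_t$ solves $\dot f_t = P_{g_t,\bar g}(f_t)$, whose linearization at $(\id_M,\bar g)$ is $-4(\delta_{\bar g}\delta_{\bar g}^* X^\flat)^\sharp$; the no-Killing-fields assumption makes this operator strictly negative, giving exponential decay of $\|\dot f_t\|_{H^{-2}}$ driven by $\|\dot g_t\|_{L^2}$. Combining this with the $L^2$ gradient bound yields an $H^{-3}$ estimate for $\tilde Q(\tilde\Phi_t)$, and only \emph{then} does parabolic regularity for the strongly parabolic gauged flow bootstrap to $H^k$. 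So the no-Killing-fields hypothesis is used twice --- once for the slice underlying the Łojasiewicz inequality, and once for this decay of the DeTurck diffeomorphisms --- and the second use is the missing idea in your sketch.
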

A Ricci flat manifold is up to a finite covering a product of irreducible Ricci-flat manifolds and a flat torus. Hence the condition on the Killing fields can also be read as saying that $\bar{g}$ has no torus factor. This can for instance be ruled out by the topological condition that the fundamental group of $M$ be finite.
The strategy of the proof will be roughly as follows: first, we establish a Łojasiewicz-Simon type inequality for the spinorial energy. This inequality implies exponential decay of the energy along the flow. We will then show that this implies convergence to a critical point. The inequality depends in its optimal form on the fact that the critical set of $\mc{E}$ is smooth. This was shown in \cite{Ammann2015b}.

We will also consider the stability of volume constrained critical points. A section $\Phi \in \mc{N}$ is a {\em volume constrained critical point}, if
$$\frac{d}{dt}\Bigr|_{t=0} \mc{E}(\Phi_t) = 0$$
for all volume preserving variations $\Phi_t$ of $\Phi$. Such a critical point evolves under the spinor flow by rescaling. A {\em volume constrained minimizer} $\Phi = (g, \varphi)$ is a volume constrained critical point, such that for any $\Psi \in \mc{N}$ close to $\Phi$ we have $\mc{E}(\Phi) \leq \mc{E}(\Psi)$, provided the metrics induced by $\Phi$ and $\Psi$ have equal volume.
\begin{thm}
Let $\bar{\Phi} = (\bar{g}, \bar{\varphi})$ be a volume constrained minimizer of $\mc{E}$. Suppose that the critical set near $\bar{\Phi}$ is a manifold and suppose $\bar{g}$ has no Killing fields. Then there exists a $C^{\infty}$ neighborhood $U$ of $\bar{\Phi}$, such that the volume normalized spinor flow converges smoothly to a volume constrained minimizer, if the initial condition is in $U$. The convergence speed in $C^k$ is exponential. 
\end{thm}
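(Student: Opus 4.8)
The plan is to mimic the proof of the unconstrained statement, working on the volume-one slice and keeping track of the extra scaling direction. Write $\mc{N}_1 = \{\Phi = (g,\varphi) \in \mc{N} : \vol_g(M) = 1\}$, and normalize $\bar\Phi$ so that $\bar\Phi \in \mc{N}_1$. Since $\vol$ is an analytic submersion on $\mc{N}$, the set $\mc{N}_1$ is an analytic submanifold and $\mc{E}|_{\mc{N}_1}$ is again analytic. A direct check of the scaling behaviour of $\mc{E}$ under $g \mapsto c^2 g$ shows that the critical points of $\mc{E}|_{\mc{N}_1}$ are precisely the volume constrained critical points of $\mc{E}$ lying on $\mc{N}_1$, and that the volume normalized spinor flow is, after a time-dependent reparametrization and rescaling, the negative $L^2$-gradient flow of $\mc{E}|_{\mc{N}_1}$; in particular along it $\tfrac{d}{dt}\mc{E}(\Phi_t) = -\|\grad(\mc{E}|_{\mc{N}_1})(\Phi_t)\|_{L^2}^2$. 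As in the proof of the first theorem, the hypothesis that $\bar g$ carries no Killing fields makes the $\Diff(M)$-action locally free at $\bar\Phi$, so one can pass to a slice transverse to the orbit and thereby render the linearization of the constrained, gauge-fixed gradient an elliptic operator with finite-dimensional kernel.

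The core step is a Łojasiewicz--Simon inequality for $\mc{E}|_{\mc{N}_1}$: there exist a $C^\infty$ neighborhood $U$ of $\bar\Phi$, a constant $C>0$ and $\theta \in (0,\tfrac12]$ with
$$ |\mc{E}(\Phi) - \mc{E}(\bar\Phi)|^{1-\theta} \le C\,\|\grad(\mc{E}|_{\mc{N}_1})(\Phi)\|_{L^2} \qquad \text{for } \Phi \in U. $$
I would obtain this from the abstract Łojasiewicz--Simon theorem for analytic functionals with Fredholm Hessian: analyticity of $\mc{E}|_{\mc{N}_1}$ is inherited from $\mc{E}$ together with the analyticity of the constraint, and the Fredholm property of the Hessian follows from ellipticity after gauge fixing exactly as in the unconstrained case, the single extra constraint equation and the extra scaling direction changing the Hessian only by bounded, finite-rank terms. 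The place where the hypotheses enter decisively is the value of the exponent: since by \cite{Ammann2015b} the critical set near $\bar\Phi$ is a smooth manifold, and (using the no-Killing-fields assumption) its intersection with the slice in $\mc{N}_1$ is again a manifold whose tangent space coincides with the kernel of the Hessian of $\mc{E}|_{\mc{N}_1}$, the functional is integrable in the sense required to take $\theta = \tfrac12$, yielding the optimal inequality $|\mc{E}(\Phi) - \mc{E}(\bar\Phi)|^{1/2} \le C\|\grad(\mc{E}|_{\mc{N}_1})(\Phi)\|_{L^2}$.

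From here the convergence argument is the standard one. Since $\bar\Phi$ is a volume constrained minimizer, $\mc{E} \ge \mc{E}(\bar\Phi)$ on $U \cap \mc{N}_1$, so along the volume normalized flow the function $f(t) = \mc{E}(\Phi_t) - \mc{E}(\bar\Phi)$ is nonnegative and nonincreasing; with $\theta = \tfrac12$ one computes $-\tfrac{d}{dt}f(t)^{1/2} = \tfrac12 f(t)^{-1/2}\|\grad\mc{E}\|_{L^2}^2 \ge \tfrac1{2C}\|\grad\mc{E}\|_{L^2} = \tfrac1{2C}\|\partial_t\Phi_t\|_{L^2}$, so the $L^2$-length of the trajectory is finite. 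By the usual continuity argument the flow then stays in $U$ (for $\Phi_0$ close enough to $\bar\Phi$) and converges in $L^2$ to a limit $\Phi_\infty$, necessarily a critical point of $\mc{E}|_{\mc{N}_1}$; parabolic smoothing for the spinor flow, as used for short-time existence and in the first theorem, upgrades this to smooth subconvergence and, with the finite-length bound, to smooth convergence $\Phi_t \to \Phi_\infty$. The exponent $\theta = \tfrac12$ turns the Łojasiewicz differential inequality into $f(t) \le C e^{-\delta t}$, and integrating the length estimate over $[t,\infty)$ gives $\|\Phi_t - \Phi_\infty\|_{L^2} \le C e^{-\delta t/2}$; interpolating with uniform higher-order bounds yields exponential convergence in every $C^k$. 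Finally $\mc{E}$ is locally constant on the critical manifold, so $\mc{E}(\Phi_\infty) = \mc{E}(\bar\Phi)$, whence $\Phi_\infty$ is itself a volume constrained minimizer.

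The main obstacle I expect is organizing the constrained, gauge-fixed picture so that the abstract Łojasiewicz--Simon machinery applies cleanly: one must verify that the volume normalized flow really is, up to reparametrization, the gradient flow of $\mc{E}|_{\mc{N}_1}$ — so that the identity $\tfrac{d}{dt}\mc{E} = -\|\grad\mc{E}\|_{L^2}^2$ holds with exactly the gradient appearing in the inequality — and that imposing $\vol = 1$ together with a diffeomorphism slice does not spoil the Fredholm, ellipticity, and integrability properties. Secondary difficulties are the careful bookkeeping of the scaling behaviour of $\mc{E}$ needed to match criticality on $\mc{N}_1$ with volume constrained criticality, and establishing the requisite parabolic smoothing estimates for the only weakly parabolic spinor flow with the additional nonlocal volume-normalization term, so that $L^2$-convergence bootstraps to $C^\infty$.
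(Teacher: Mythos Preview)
Your high-level strategy --- restrict to $\mc{N}^1$, prove an optimal Łojasiewicz--Simon inequality on a diffeomorphism slice using the assumed manifold structure of the critical set, and run the standard gradient-flow argument --- is the same as the paper's. Two small corrections: the smoothness of the critical set is a \emph{hypothesis} here, not a consequence of \cite{Ammann2015b} (that result is for parallel spinors, i.e.\ unconstrained critical points); and the paper shows directly in its section on the volume normalized flow that this flow is exactly the negative $L^2$-gradient flow of $\mc{E}|_{\mc{N}^1}$, without further reparametrization.

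The substantive gap is the step you flag yourself: bootstrapping the $L^2$ length bound to control in a norm strong enough to close the continuity argument and obtain smooth convergence. Your phrase ``parabolic smoothing for the spinor flow'' hides the main difficulty: the spinor flow is only weakly parabolic, so neither interior estimates for $\Phi_t$ nor for $Q(\Phi_t)$ are available directly, and ``interpolating with uniform higher-order bounds'' begs the question of where those bounds come from. The paper's resolution is not to restrict the flow to a slice (the flow does not preserve any slice) but to pass to the gauged (DeTurck) flow $\tilde\Phi_t = F_t^*\Phi_t$, which is strongly parabolic. The Łojasiewicz inequality controls $\|Q(\Phi_t)\|_{L^2}$, not $\|\tilde Q(\tilde\Phi_t)\|$; to bridge this, the paper proves a separate estimate for the mapping flow $\partial_t f_t = P_{g_t,\bar g}(f_t)$ (this is where the no-Killing-fields hypothesis is used a second time, to make $DP$ strictly negative), combines it with the $L^2$ gradient bound to estimate $\int\|\tilde Q(\tilde\Phi_t)\|_{H^{-3}}\,dt$, and only then invokes parabolic regularity for the \emph{linear strongly parabolic} equation $\partial_t\tilde Q_t = D\tilde Q(\tilde\Phi_t)\tilde Q_t$ to reach $H^k$. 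Convergence of $f_t$ is established by the same mechanism, and finally $\Phi_t = (F_t^{-1})^*\tilde\Phi_t$ converges. Without some device of this kind, your continuity argument (``the flow then stays in $U$'') cannot be closed, since the Łojasiewicz inequality requires $\Phi_t$ to remain in a $C^{2,\alpha}$ neighborhood and you have no mechanism to guarantee that from $L^2$ data alone.
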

The strategy for the proof is essentially the same as in the case of critical points. However, here both the condition on Killing fields and the assumption that the critical set near $\bar{\Phi}$ is a manifold are strong restrictions. Indeed, suppose $(g, \varphi)$ is such that
$$\nabla^g_X \varphi = \lambda X \cdot \varphi \text{ for all } X \in \Gamma(TM)$$
with $\lambda \in \R$.
Then $(g,\varphi)$ is a volume constrained critical point. The spinor $\varphi$ is called a Killing spinor.
If $g$ carries a Killing spinor, then the cone $((0,\infty)\times M, dr^2 + r^2g)$ carries a parallel spinor.
A large class of metrics with Killing spinors is supplied by Sasaki--Einstein manifolds.
Since the Reeb vector field is a Killing vector field, all Sasaki--Einstein manifolds carry Killing fields.
Furthermore, the moduli space of Sasaki--Einstein manifolds is not known to be smooth in general.
What's more, in contrast to the space of parallel spinors, whose dimension is locally constant under Ricci flat deformations of the metric, the dimension of the space of killing spinors can jump under Einstein deformations of the metric. Indeed, a 3-Sasakian manifold admits three linearly independent Killing spinors. Van Coevering found that a toric 3-Sasakian manifold has Einstein deformations $g_t$, such that the space of Killing spinors is two-dimensional for any $t\neq 0$.

Since the spinor flow is a generalization of the heat flow for $G_2$-structures introduced in \cite{Weiss2012}, our result is a generalization of the stability result proven there. However, the arguments of our proof are closer in spirit to the proofs in \cite{Haslhofer2011}, \cite{Haslhofer2014}, \cite{Krncke2014}, where stability of Ricci-flat and Einstein metrics with respect to the Ricci flow is shown.

\section*{Acknowledgements}
The author thanks Hartmut Weiß for posing the problem and numerous discussions related to it.

\section{The universal spinor bundle and the spinor flow}
For convenience and completeness, we recall the precise definitions of the spinor flow as well as results on short time existence of the flow. Details may be found in \cite{Ammann2015}. 
We defined the spinor energy to be a functional on sections of the universal spinor bundle. We will now construct this universal spinor bundle.
Before we do this, let us first recall the ordinary spinor bundle on a spin manifold. 
The orientation preserving component of the general linear group $\GL_+(n)$ has fundamental group $\Z_2$ and hence there exists a universal double covering group $\widetilde{\GL}_+(n)$ together with a covering map $\xi: \widetilde{\GL}_+(n) \to \GL_+(n)$, which is also a homomorphism.
Let $M$ be a spin manifold of dimension $n$. By this we mean a manifold $M$ and a $\widetilde{\GL}_+(n)$ principal bundle $\tilde{P}$ which covers the $\GL_+(n)$ frame bundle $P$, $\pi: \tilde{P} \to P$, so that for any $g \in \widetilde{\GL}_+(n), p \in \tilde{P}$ we have
$$\pi(p \cdot g) = \pi(p) \cdot \xi(g).$$
Now let $g$ be a metric on $M$. The metric induces a reduction of the structure group of $P$ to the oriented orthonormal frame bundle $P_{\SO(n)}$. The group $\Spin(n) = \xi^{-1}(\SO(n))$ is called the spin group. Thus the structure group of $\tilde{P}$ reduces to $\Spin(n)$ and we call this bundle $P_{\Spin(n)}$, which double covers $P_{\SO(n)}$.
Now we define the (complex) spinor bundle as the associated vector bundle
$$\Sigma_g M = P_{\Spin(n)} \times_{\Delta_n} \Sigma_n$$
where $\Delta_n : \Spin(n) \to \End(\Sigma_n)$, $\Sigma_n = \C^{2^{[n/2]}}$, is the standard complex spin representation. Up to scaling, there exists one $\Spin(n)$ invariant Hermitian product on $\Sigma_n$. This turns $\Sigma_g M$ into a Hermitian bundle.
The universal spinor bundle gives us a way to compare spinors over different metrics.
Recalling that
$$\faktor{\GL_+(n)}{\SO(n)} \cong \odot^2_+ \R^n,$$
we conclude
$$\odot^2_+ T^*M = P \times_{\GL_+(n)} \faktor{\GL_+(n)}{\SO(n)} = \tilde{P} \times_{\widetilde{\GL}_+(n)} \faktor{\widetilde{\GL}_+(n)}{\Spin(n)} = \faktor{\tilde{P}}{\Spin(n)}.$$
We define
$$\Sigma M = \tilde{P} \times_{\Delta_n} \Sigma_n.$$
This is a vector bundle over $\odot^2_+ T^*M$, i.e. we have the structure of two nested fibrations:
$$\Sigma M \xrightarrow{\pi_{\Sigma}} \odot^2_+ T^*M \xrightarrow{\pi_{\mc{M}}} M.$$
Given a metric $g$ we can identify $\pi_{\Sigma}^{-1}(g)$ and $\Sigma_g M$. Using this identification any element $\Phi \in\Sigma M$ can be considered as a pair of a metric $g_{\Phi} = \pi_{\Sigma}(\Phi)$ and a spinor $\varphi_{\Phi} \in \Sigma_{g_{\Phi}} M$.
As above we also get a Hermitian inner product $h$ on $\Sigma M$.
We denote by $\langle \cdot, \cdot \rangle = \operatorname{Re} h$ the real part of $h$ and $| \cdot |$ the associated norm.
Now the definition
$$\mc{N} = \{\Phi \in \Gamma(\Sigma M) : |\Phi| = 1\}$$
from the introduction is fully explained. To make sense of the gradient of $\mc{E}$ we need to compute the tangent spaces of $\mc{N}$. For this we need to compare spinors in different fibers $\Sigma_{g_1} M$ and $\Sigma_{g_2} M$. This can be done using the Bourgignon--Gauduchon connection.

Suppose we have a vector space $V$ and two inner products $\langle \cdot, \cdot \rangle_1$ and $\langle \cdot, \cdot \rangle_2$. Then there exists a unique endomorphism $A^2_1 : V \to V$, such that
$$\langle v, w \rangle_2 = \langle A_1^2 v, w \rangle_1 \text{ for all } v,w \in V.$$
Denote by $B_1^2$ the square root of $A_1^2$. The operator $B_1^2$ maps orthonormal bases of $(V, \langle \cdot, \cdot \rangle_1)$ to orthonormal bases of $(V, \langle \cdot, \cdot \rangle_2)$. Since no choices are involved and $B_1^2$ depends smoothly on the inner products, we can transfer this construction to Riemannian manifolds $(M,g_i)$, $i=1,2$, and consequently obtain a smooth principal bundle isomorphism
$$P^{g_2}_{\SO(n)} \to P^{g_1}_{\SO(n)}.$$
This map lifts to the spinor bundle and hence induces a isomorphism $\hat{B}^{g_2}_{g_1}:\Sigma_{g_2} M \to \Sigma_{g_1} M$.
Since the metric on $\Sigma_n$ is $\Spin(n)$-invariant, this is an isometry.
Notice that the restriction of $\hat{B}_{g_1}^{g_2}$ to a fibre over a point $x \in M$ only depends on the scalar products $g_1(x), g_2(x)$ on $T_xM$.
We now have a canonical isometry between two spinor bundles over the same manifold with two distinct metrics. From this we can derive a horizontal distribution
$$\mc{H}_{\Phi} = \left\{ \frac{d}{dt}\Bigr|_{t=0} \hat{B}^{g}_{g_t} \varphi \; \Big| \; g: (-\epsilon, \epsilon) \to \odot^2_+ T_x^*M, g(0) = g \right\} \subset T_{\Phi} \Sigma M $$
where $\Phi = (g, \varphi) \in \Sigma M_x$. By construction, $\mc{H}_{\Phi} \cong \odot^2 T_x^*M$.
This distribution yields a splitting of the tangent bundle
$$T_{\Phi} \Sigma M_x = \mc{H}_{\Phi} \oplus \Sigma_g M  \cong \odot^2 T_x^*M \oplus \Sigma_g M.$$
Turning to sections of the universal spinor bundle, this implies that for $\Phi = (g, \varphi) \in \Gamma (\Sigma M)$ we have the splitting
$$T_{\Phi} \Gamma(\Sigma M) = \Gamma(\odot^2 T^* M) \oplus \Gamma( \Sigma_g M)$$
and if $\Phi \in \mc{N}$
$$T_{\Phi} \mc{N} = \Gamma(\odot^2 T^* M) \oplus \Phi^{\perp},$$
where
$$\Phi^{\perp} = \{ \psi \in \Gamma(\Sigma_g M) : \langle \varphi, \psi\rangle \equiv 0 \}.$$
Now we define for $\Phi \in \Gamma (\Sigma M)$ and $\Psi_1, \Psi_2 \in T_{\Phi} \Gamma( \Sigma M)$
$$\left( \Psi_1, \Psi_2 \right)_{L^2} = \int_M g(h_1, h_2) \vol_g + \int_M \langle \psi_1, \psi_2 \rangle_{\Sigma_g M} \vol_g$$
where $(h_i, \psi_i) \in \Gamma(\odot^2_+ T^*M) \oplus \Gamma (\Sigma_g M)$ are the sections corresponding to $\Psi_i$ according to the isomorphisms above. From now on we will use these isomorphisms implicitly.
Now the negative gradient
$$Q: \mc{N} \to T\mc{N}$$
is defined by the property
$$\left(Q(\Phi), \Psi \right) = - \frac{d}{dt}\Bigr|_{t=0} \mc{E}(B^g_{g+th} (\varphi + t \psi)),$$
where $\Phi = (g, \varphi) \in \mc{N}$ and $\Psi = (h, \psi) \in T_{\Phi} \mc{N}$.

\section{Diffeomorphism invariance, the gauged spinor flow and a slice theorem}
We denote by $\Diff_s(M)$ the group of spin diffeomorphisms, i.e. the orientation preserving diffeomorphisms of $M$, which lift to $\tilde{P}$. To be more precise, by a lift of a orientation preserving diffeomorphism $f: M \to M$ to $\tilde{P}$, we mean a lift of the map
$$ P_x \ni [e_1, ..., e_n] \mapsto [Df e_1,..., Df e_n] \in P_{f(x)}$$
induced by $f$ on the oriented frame bundle $P$ to the topological spin bundle $\tilde{P}$.
Since $\tilde{P}$ is a $\Z_2$ bundle over $P$, there is a choice of lift and the group of lifts of spin diffeomorphisms $\widehat{\Diff}_s(M)$ fits into an exact sequence
$$0 \to \Z_2 \to \widehat{\Diff}_s(M) \to \Diff_s(M) \to 0.$$
The group $\widehat{\Diff}_s(M)$ acts on $\Gamma( \Sigma M)$ in the following way. Let $F \in \widehat{\Diff}_s(M)$, $\Phi = (g, \varphi) \in \Gamma(\Sigma M)$. The map $F : \tilde{P} \to \tilde{P}$ is a lift of a diffeomorphism $f: M \to M$. Restricting $\tilde{P}$ to $P_{\Spin(n)}^g$ we obtain an isomorphism
$$F : P_{\Spin(n)}^g \to P^{f_* g}_{\Spin(n)}.$$
Then we define locally
$$F_*\varphi = [F \circ b \circ f^{-1}, \varphi \circ f^{-1}] \in \Gamma(\Sigma_{f_* g} M),$$
if $\varphi = [b, \varphi]$, $b$ a local section of $P_{\Spin(n)}^g$, $\varphi$ a $\Sigma_n$ field. The push forward preserves the metric in the following sense:
$$|F_* \varphi|_{f_* g} (x)  = |\varphi|_g (f^{-1}(x)).$$
In particular $F_*$ preserves $\mc{N}$. Moreover, we have
$$\mc{E}(F_* \Phi) = \mc{E}(\Phi),$$
$$Q(F_* \Phi) = F_* Q(\Phi).$$
In particular, the spinor flow is not strongly parabolic, since $Q$ is invariant under an infinite dimensional group. This invariance is reflected on the infinitesimal level by the following Bianchi-type identity
$$\lambda_{g,\varphi} Q(g,\varphi) = 0$$
where
$$\lambda_{g, \varphi} : \Gamma(\odot^2 T^*M) \oplus \Gamma(\Sigma_g M) \to \Gamma(TM)$$
is defined as the formal adjoint of
$$\lambda_{g,\varphi}^* : \Gamma(TM) \to \Gamma(\odot^2 T^*M) \oplus \Gamma(\Sigma_g M)$$
$$X \mapsto \left(2 \delta^*_g X^{\flat}, \nabla^g_X \varphi - \frac{1}{4} dX^{\flat} \cdot \varphi\right) = \left( \mc{L}_X g, \tilde{\mc{L}}_X \varphi\right) =: \tilde{\mc{L}}_X \Phi.$$
Indeed, the tangent space of the orbit $\widehat{\Diff}_s(M).(g,\varphi)$ is the image of $\lambda_{g,\varphi}^*$.
At a critical point $(g,\varphi) \in \Gamma( \Sigma M)$, we get the following exact sequence
$$0 \to \Gamma(TM) \xrightarrow{\lambda_{g, \varphi}^*} \Gamma(\odot^2 T^* M) \oplus \Gamma(\varphi^{\perp}) \xrightarrow{L_{g,\varphi}} \Gamma(\odot^2 T^* M) \oplus \Gamma(\varphi^{\perp}) \xrightarrow{\lambda_{g, \varphi}} \Gamma(TM) \to  0,$$
where $L_{g,\varphi} = DQ (g,\varphi)$. It turns out that with
$$X_{\bar{g}} : \Gamma(\odot^2 T^*M) \to \Gamma(TM)$$
$$g \mapsto -2 (\delta_{\bar{g}} g)^{\sharp}$$
and $\bar{g}$ any given metric the operator
$$\tilde{Q}_{\bar{g}}(\Phi) = Q(\Phi) + \lambda^*_{g,\varphi}(X_{\bar{g}}(\Phi))$$
is strongly parabolic for any $\Phi = (\bar{g}, \varphi) \in \mc{N}_{\bar{g}}$ and hence the flow
$$\partial_t \Phi_t = \tilde{Q}(\Phi_t)$$
exists for short time. We call this flow {\em gauged spinor flow} or {\em spinor-DeTurck flow}.
Moreover, the spinor flow and the gauged spinor flow differ only by a family of diffeomorphisms, i.e.
if $\Phi_t = (g_t, \varphi_t)$ is a solution of the spinor flow and $\tilde{\Phi}_t = (\tilde{g}_t, \tilde{\varphi}_t)$ is a solution of the gauged spinor flow with $\Phi_0 = \tilde{\Phi}_0$, then there exists a family $F_t \in \widehat{\Diff}_s(M)$, induced by $f_t \in \Diff(M)$, such that
$$\tilde{\Phi}_t = F_{t*} \Phi_t.$$
This family obeys the partial differential equation
$$\partial_t f_t = P_{g_t, \bar{g}}(f_t)$$
with initial condition $f_0 = \id_M$, where
$$P_{g,\bar{g}} : \mc{C}^{\infty}(M,M) \to T \mc{C}^{\infty}(M,M)$$
$$f \mapsto -df (X_{f^* \bar{g}} (g)).$$
For future reference we note that the linearization of $P_{\bar{g},\bar{g}}$ at $\id_M$ is given by
$$\Gamma(TM) \ni X \mapsto -4 (\delta_{\bar{g}} \delta^*_{\bar{g}} X^{\flat})^{\sharp} \in \Gamma(TM).$$
Because
$$T_{\Phi} \Gamma(\Sigma M) = \ker \lambda_{\Phi} \oplus \im \lambda_{\Phi}^* = \ker \lambda_{\Phi} \oplus T_{\Phi} \widehat{\Diff}_s(M).\Phi,$$
we can consider $\ker \lambda_{\Phi}$ to be an infinitesimal slice to the diffeomorphism action.
 Indeed, we will prove that, in a weak sense, $\ker \lambda_{g,\varphi}$ parametrizes a slice in a simple way. To see this we first need a parametrization of $\Gamma(\Sigma M)$ by the set $T_{(g,\varphi)} \Gamma(\Sigma M) = \Gamma(\odot^2 T^*M) \oplus \Gamma(\Sigma_g M)$ near $(g,\varphi)$. This will be frequently useful and throughout the rest of the article $\Xi = \Xi_{g,\varphi}$ denotes this parametrization.
We define
$$\Xi_{g,\varphi}: (U_g \subset \Gamma(\odot^2 T^*M)) \times \Gamma(\Sigma_g M) \to \Gamma(\Sigma M)$$
$$(h, \psi) \mapsto (g+h, \hat{B}^g_{g+h}(\varphi + \psi))$$
and its inverse
$$\Xi^{-1}: \Gamma(\Sigma M) \to U_g \times \Gamma(\Sigma_g M)$$
$$(g', \varphi') \mapsto (g'-g, \hat{B}^{g'}_g (\varphi') - \varphi).$$
Here $U_g = \{h \in \Gamma(\odot^2 T^*M) : g+h \text{ is a metric}\}$.
In terms of this parametrization we can formulate the following slice theorem:
\begin{prop}
Let $\Phi = (g,\varphi) \in \Gamma(\Sigma M)$ and assume $g$ has no Killing fields. Then there exists a $C^{k+1,\alpha}$ neighborhood $U$ of $\Phi$, such that for any $\tilde{\Phi} \in U$, there exists a $C^{k+2,\alpha}$ diffeomorphism $f: M\to M$, such that
$$\lambda_{\Phi}(\Xi^{-1}(F^* \tilde{\Phi})) = 0.$$
\end{prop}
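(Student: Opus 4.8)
The plan is to fix the diffeomorphism gauge by solving the equation $\lambda_{\Phi}(\Xi^{-1}(F^*\tilde{\Phi})) = 0$ for $f$ with the implicit function theorem. The key point is that the linearization of this equation in $f$ at the reference point is the operator $\lambda_{\Phi}\lambda_{\Phi}^*$, which is an isomorphism precisely because $g$ carries no Killing field.

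First I would fix the functional-analytic setting. Let $\mc{D}^{k+2,\alpha}$ be the Banach manifold of $C^{k+2,\alpha}$ diffeomorphisms of $M$ in the identity component of $\Diff(M)$, modelled near $\id_M$ on $C^{k+2,\alpha}(TM)$ via the exponential map of an auxiliary smooth metric. Every such $f$ is a spin diffeomorphism and admits a canonical lift $F\in\widehat{\Diff}_s(M)$ close to the identity lift, so that $F^*\tilde{\Phi}$ is defined. For $f$ close to $\id_M$ and $\tilde{\Phi}$ in a small $C^{k+1,\alpha}$-neighbourhood $U$ of $\Phi$ one obtains a map
\[
\mc{N}(f,\tilde{\Phi}) \;=\; \lambda_{\Phi}\bigl(\Xi^{-1}(F^*\tilde{\Phi})\bigr) \;\in\; C^{k,\alpha}(TM),\qquad \mc{N}(\id_M,\Phi) = 0,
\]
the last equality holding because $\id_M^*\Phi = \Phi$, $\Xi^{-1}(\Phi) = 0$ and $\lambda_{\Phi}$ is linear. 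Note that $\lambda_{\Phi} = \lambda_{g,\varphi}$ has smooth coefficients, coming from the fixed smooth reference $\Phi$.

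Next I would compute $D_f\mc{N}(\id_M,\Phi)$. Along $X\in\Gamma(TM)$ the derivative of $f\mapsto F^*\Phi$ at $\id_M$ equals, up to an irrelevant sign, the infinitesimal action $X\mapsto\tilde{\mc{L}}_X\Phi = \lambda_{\Phi}^*X$, since the tangent space of the orbit $\widehat{\Diff}_s(M).\Phi$ is $\im\lambda_{\Phi}^*$; and since $D\Xi_{g,\varphi}(0,0) = \id$ by construction of the parametrization via the Bourguignon--Gauduchon horizontal distribution, also $D\Xi^{-1}(\Phi) = \id$, so the chain rule gives $D_f\mc{N}(\id_M,\Phi)X = \pm\lambda_{\Phi}\lambda_{\Phi}^*X$. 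Now $\lambda_{\Phi}^*$ has injective principal symbol already in its $\odot^2 T^*M$-component, the symbol $\xi\mapsto(X\mapsto 2\,\xi^{\flat}\odot X^{\flat})$ of $X\mapsto 2\delta_g^*X^{\flat} = \mc{L}_X g$; hence $\lambda_{\Phi}\lambda_{\Phi}^* = (\lambda_{\Phi}^*)^*\lambda_{\Phi}^*$ is a second-order, formally self-adjoint, elliptic operator with positive definite leading symbol, so on the closed manifold $M$ it is Fredholm of index $0$ from $C^{k+2,\alpha}(TM)$ to $C^{k,\alpha}(TM)$ and it is an isomorphism as soon as it is injective. If $\lambda_{\Phi}\lambda_{\Phi}^*X = 0$ then $0 = (\lambda_{\Phi}\lambda_{\Phi}^*X,X)_{L^2} = \|\lambda_{\Phi}^*X\|_{L^2}^2$, so in particular $\mc{L}_X g = 0$; as $g$ has no Killing field, $X = 0$. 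Thus $\lambda_{\Phi}\lambda_{\Phi}^*\colon C^{k+2,\alpha}(TM)\to C^{k,\alpha}(TM)$ is an isomorphism, and, granting enough smoothness of $\mc{N}$, the implicit function theorem yields --- after shrinking $U$ --- a map $\tilde{\Phi}\mapsto f(\tilde{\Phi})\in\mc{D}^{k+2,\alpha}$ with $f(\Phi) = \id_M$ and $\mc{N}(f(\tilde{\Phi}),\tilde{\Phi}) = 0$, which is the assertion.

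The hard part will be the regularity bookkeeping behind ``granting enough smoothness'': the $\widehat{\Diff}_s(M)$-action $(f,\tilde{\Phi})\mapsto F^*\tilde{\Phi}$ exhibits the classical loss of one derivative and is continuous but not $C^1$ between Hölder spaces at the same level --- this is precisely why the statement trades $C^{k+1,\alpha}$ data for a $C^{k+2,\alpha}$ diffeomorphism. I would handle this as in the classical slice theorems (e.g.\ Ebin): arrange the composite $\mc{N}$ on suitably staggered Hölder levels so that inversion $f\mapsto f^{-1}$, the derivative $f\mapsto Df$ and left composition with the fixed smooth $\Phi$ combine into a $C^1$ map whose linearization at $(\id_M,\Phi)$ is still the above isomorphism, apply the implicit function theorem to produce a solution $f$ for smooth $\tilde{\Phi}$, and then for general $\tilde{\Phi}\in C^{k+1,\alpha}$ obtain and regularize $f$ by elliptic theory: the equation $\mc{N}(f,\tilde{\Phi}) = 0$ is a second-order elliptic equation for $f$ --- its linearization at solutions near $\id_M$ being $\lambda_{\Phi}\lambda_{\Phi}^*$ --- with top-order coefficients depending on $\tilde{\Phi}$ and $D\tilde{\Phi}$, so Schauder estimates promote a $C^{2,\alpha}$ solution to a $C^{k+2,\alpha}$ one exactly when $\tilde{\Phi}\in C^{k+1,\alpha}$. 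A tame Nash--Moser inverse function theorem would be an alternative route to the same conclusion.
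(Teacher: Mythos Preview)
Your approach is correct and follows the same overall strategy as the paper: set up a gauge-fixing map whose partial linearization in the diffeomorphism direction at the reference point is $\lambda_{\Phi}\lambda_{\Phi}^*$, observe that this operator is elliptic and self-adjoint with kernel equal to the Killing fields, hence invertible under the hypothesis, and conclude by the implicit function theorem. The invertibility argument you give is identical to the paper's.

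The one genuine difference is the chart you use on the diffeomorphism side. The paper, following Viaclovsky, parametrizes diffeomorphisms near $\id_M$ by vector fields via the time-$1$ flow map, $X\mapsto\phi_1^X$, and defines
\[
G\bigl((h,\psi),X\bigr)=\lambda_{\Phi}\bigl((\phi_1^X)^*\Xi(h,\psi)\bigr),
\]
so that the implicit function theorem is applied directly on the linear space $\Gamma^{k+1,\alpha}(\odot^2T^*M\oplus\Sigma_gM)\times\Gamma^{k+2,\alpha}(TM)$ rather than on the Banach manifold $\mc{D}^{k+2,\alpha}$. This packaging lets the paper sidestep the explicit discussion of the derivative loss in the $\widehat{\Diff}_s(M)$-action that you (correctly) flag and propose to handle via staggered H\"older levels and elliptic bootstrapping. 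Both routes are standard and lead to the same conclusion; the time-$1$-flow parametrization simply hides the regularity bookkeeping inside the well-known smoothness of $X\mapsto(\phi_1^X)^*T$ and keeps the write-up shorter.
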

\begin{proof}
We base the proof on \cite{Viaclovsky13}, theorem 3.6.
Consider the map
$$G: \Gamma^{k+1, \alpha}(\odot^2 T^*M \oplus \Sigma_g M) \times \Gamma^{k+2, \alpha}(TM) \to \Gamma^{k,\alpha}(TM)$$
$$((h, \psi), X) \mapsto \lambda_{\Phi}(\phi^{X*}_1(g+h, B^g_{g+h}(\varphi + \psi)))$$
Then the derivative of $G$ at $((0,0), 0)$ in $X$ direction is given by
$$\frac{d}{dt}\Bigr|_{t=0}  \lambda_{\Phi}(\phi^{tV*}_1 \Phi) = \lambda_{\Phi}(\tilde{\mc{L}}_V \Phi) = \lambda_{\Phi} \lambda_{\Phi}^* V$$
for $V \in \Gamma^{k+2, \alpha}(TM)$. Since $g$ posesses no Killing fields, $\lambda_{\Phi} \lambda_{\Phi}^*$ is injective, because in the first component $\lambda_{\Phi} \lambda_{\Phi}^* X$ is just $\delta_g \delta_g^* X^{\flat}$. Additionally, $\lambda_{\Phi} \lambda_{\Phi}^*$ is an elliptic operator. It is selfadjoint and hence it must also be surjective. Thus we may apply the implicit function theorem and we find that there exists a neighborhood $U \subset \Gamma^{k+1, \alpha}(\odot^2 T^*M \oplus \Sigma_g M)$ of $(0,0)$ and a map $H: U \to \Gamma^{k+2, \alpha}(TM)$, such that $G((h,\psi), H(h,\psi)) = 0$.
Now let $\tilde{\Phi} \in \Xi(U)$.
Then denote by $f$ the time-$1$ map of the vector field $H(\Xi^{-1}(\tilde{\Phi}))$. Then
$$\lambda_{\Phi}(F^*(\Xi^{-1}(\tilde{\Phi}))) = 0$$
by construction.
The statement then follows, because
$$F^*(\Xi^{-1}(\tilde{\Phi})) = \Xi^{-1}(F^*(\tilde{\Phi})).$$
\end{proof}

\section{Volume normalized spinor flow}
Volume constrained critical points evolve by rescaling under the spinor flow. We expect similar behavior near such a point. To address convergence questions in this situation, it is thus useful to rescale the solutions to a fixed volume. In this section, we introduce the volume normalized spinor flow and describe its evolution equation.
Let $\Phi_t = (g_t, \phi_t)$ be a solution to the spinor flow. We denote by $\mu(t)$ the normalizing factor $\left(\int_M \vol_{g_t}\right)^{-2/n}$. Then $\int_M \vol_{\mu(t) g_t} = 1$.
Now let $\tilde{\Phi}(t) = (\tilde{g}(t), \tilde{\varphi}(t))$, where
$$\tilde{g}(t) = \mu (\tau(t)) g_{\tau(t)},$$
$$\tilde{\varphi}(t) =  \hat{B}^{g_{\tau(t)}}_{\mu(\tau(t)) g_{\tau(t)}} (\varphi_{\tau(t)}),$$
where $\tau: I \subset \R \to J \subset \R$ is some time reparametrization.
Then we have
$$\partial_t \tilde{g}_t = \dot{\mu}(\tau(t)) \tau'(t) g_{\tau(t)} + \mu(\tau(t)) \dot{g}_{\tau(t)} \tau'(t),$$
$$\partial_t \tilde{\varphi}_t = \hat{B}^{g_{\tau(t)}}_{\mu(\tau(t)) g_{\tau(t)}} (\dot{\varphi}_{\tau(t)}) \tau'(t).$$
Solving a separable ordinary differential equation, we can arrange $\tau'(t) \mu(\tau(t)) = 1$.
We call $\tilde{\Phi}_t$ with this choice of time rescaling the {\em volume normalized spinor flow}.
For any $h\in \Gamma(\odot^2 T^*M)$, we denote by $\mathring{h}$ the tensor 
$$h - \frac{\int_M \tr_g h \vol_g}{n \int_M \vol_g} g.$$
Since $\tilde{g}_t$ has constant volume $1$, it follows that $\int_M \partial_t g_t \vol_{g_t} = 0$. 
Thus we have
$$\partial_t \tilde{g}_t = \mathring{Q}_1(g_{\tau(t)}, \varphi_{\tau(t)}).$$
By corollary 4.5 in \cite{Ammann2015}, we moreover have $Q_1(c^2 g, \hat{B}^g_{c^2 g}(\varphi)) = Q_1(g, \varphi)$, which implies
$$\partial_t \tilde{g}_t = \mathring{Q}_1(g_{\tau(t)}, \varphi_{\tau(t)}) = \mathring{Q}_1\left(\mu (\tau(t)) g_{\tau(t)}, \hat{B}^{g_{\tau(t)}}_{\mu(\tau(t)) g_{\tau(t)}} (\varphi_{\tau(t)})\right) = \mathring{Q}_1(\tilde{\Phi}_t).$$
Again by corollary 4.5 in op. cit., we have $Q_2(c^2 g, \hat{B}^g_{c^2 g} (\varphi)) = c^{-2} \hat{B}^g_{c^2 g} Q_2(g, \varphi)$. Thus
$$\partial_t \tilde{\varphi}_t = \mu(t)^{-1} \hat{B}^{g_{\tau(t)}}_{\mu(\tau(t)) g_{\tau(t)}} (Q_2(g_{\tau(t)}, \varphi_{\tau(t)})) = Q_2(\tilde{\Phi}_t).$$
We define
$$\mathring{Q}(\Phi) = (\mathring{Q}_1(\Phi), Q_2(\Phi))$$
and can rewrite the evolution of $\tilde{\Phi}_t$ as
$$\partial_t \tilde{\Phi}_t = \mathring{Q}(\tilde{\Phi}_t).$$
Since $\mathring{Q}$ is the negative gradient of $\mc{E}$ restricted to the set
$$\mc{N}^1 = \left\{ \Phi = (g, \varphi) \in \mc{N} : \int_M \vol_g = 1\right\},$$
we conclude that the volume normalized spinor flow coincides with the negative gradient flow of $\mc{E}$ restricted to $\mc{N}^1$.

\section{Analytical setup}
In the following proof of stability we will analyze three flows: the spinor flow, the gauged spinor flow and the mapping flow. Each of these flows is defined on an infinite dimensional manifold rather than a vector space and we feel it is appropiate to clarify our analytic setup, so that we can proceed in a somewhat more formal manner later on without bypassing rigor altogeher. 

The set of unit spinors $\mc{N}$ forms a Fréchet manifold with the $\mc{C}^{\infty}$ topology. We will however never use this topology directly. Instead, we will typically restrict to a chart and work with the Sobolev or $C^{k,\alpha}$ topologies. We do this as follows.
Fix $\Phi_0 = (g_0, \varphi_0) \in \Gamma(\Sigma M)$. We already constructed the chart 
$$\Xi^{-1}_{\Phi_0} : U \subset \Gamma(\Sigma M) \to V \subset \Gamma(\odot^2 T^*M) \oplus \Gamma(\Sigma_{g_0} M).$$
The metric $g_0$ then induces the usual $H^k$ and $C^{k,\alpha}$ norms on $\Gamma(\odot^2 T^*M) \oplus \Gamma(\Sigma_g M)$ and we simply pull them back via the chart.
Locally we can now consider the spinor energy $\mc{E}$ as a map $V \to \R$ and $Q$ as a map $V \to V$.
Whenever we use a $C^k$ or $H^s$ norm we implicitly use this construction. In particular, when we write $\|\Phi - \Phi_0\|_{X}$ for a fixed $\Phi_0$ and a nearby $\Phi$, we mean  $\|\Xi^{-1}_{\Phi_0} (\Phi)\|_{X}$, where $X$ is one of the discussed Banach spaces.

For the mapping flow we proceed in a similar manner. Note first that for $f_0 \in \mc{C}^{\infty}(M,M)$, there is a local chart around $f_0$ given by
$$U \subset \mc{C}^{\infty}(M,M) \to V \subset \Gamma(f_0^* TM)$$
$$f \mapsto (x \mapsto (\exp_{f_0(x)})^{-1}(f(x))),$$
where $\exp$ is the exponential map of some Riemannian metric on $g$ and $V$ is a neighborhood of the $0$ section in $TM$, such that $exp_x$ is a diffeomorphism from $V_x = T_x M \cap V$ to $\exp (V_x)$ for every $x \in M$. 
Then we define
$$U = \{f: M\to M \Big| (f_0,f)(M) \subset \exp(V)\}.$$
We can define appropiate norms in the standard manner using some Riemannian metric on $M$, for example
$$\left(X,Y\right)_{L^2} = \int_M g_{f_0(p)}(X(p), Y(p)) \vol_g$$
for $X,Y \in \Gamma(f_0^* TM)$.

For future reference we also quote a standard parabolic estimate and prove an interior estimate following from this.
\begin{thm}
Suppose $A_t$ is an elliptic differential operator of order $m$, uniformly elliptic in $t$, with $\mc{C}^{\infty}$ coefficients in $x$ and $t$.
Then for any $s \in \R$ and $T > 0$, there exists $C > 0$ such that
$$\|u_t\|_{H^s}^2 + \int_0^T \|u_{t'}\|_{H^{s+m'}}^2 dt' \leq C \left( \|u_0\|_{H^s}^2 + \int_0^T \|\partial_t u_{t'} - A_{t'} u_{t'}\|^2_{H^{s-m'}} dt' \right)$$
for any $t \in [0, T]$ and $u \in C^1([0,T], H^s) \cap C^0([0,T], H^{s+m'})$, where $m' = m/2$.
\end{thm}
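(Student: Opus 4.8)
The plan is to establish this by the classical parabolic energy method: differentiate $\|u_t\|_{H^s}^2$ and test the equation against $u_t$. Set $F_t := \partial_t u_t - A_t u_t$ and $y(t) := \|u_t\|_{H^s}^2$. Since $u \in C^1([0,T],H^s)$, the function $y$ is $C^1$ with $y'(t) = 2\langle \partial_t u_t, u_t\rangle_{H^s}$; using the equation and reading $\langle A_t u_t, u_t\rangle_{H^s}$ and $\langle F_t, u_t\rangle_{H^s}$ as the $H^{s-m'}$--$H^{s+m'}$ duality pairing (legitimate because $A_t u_t, F_t \in H^{s-m'}$ while $u_t \in H^{s+m'}$, and consistent with the $H^s$ inner product wherever both interpretations apply), one obtains $y'(t) = 2\langle A_t u_t, u_t\rangle_{H^s} + 2\langle F_t, u_t\rangle_{H^s}$. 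Rather than work directly at level $s$, it is convenient to conjugate by the elliptic pseudodifferential operator $\Lambda^s := (1 - \Delta_g)^{s/2}$: then $v_t := \Lambda^s u_t$ solves $\partial_t v_t - \tilde A_t v_t = \Lambda^s F_t$ with $\tilde A_t := \Lambda^s A_t \Lambda^{-s}$ still uniformly elliptic in $t$ with the same principal symbol and smooth coefficients, and with $\|v_t\|_{L^2} = \|u_t\|_{H^s}$, $\|v_t\|_{H^{m'}} = \|u_t\|_{H^{s+m'}}$, $\|\Lambda^s F_t\|_{H^{-m'}} = \|F_t\|_{H^{s-m'}}$; so it suffices to treat $s = 0$.

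The one substantial analytic ingredient is a Gårding inequality, uniform in $t \in [0,T]$: because $\tilde A_t$ is elliptic with the sign convention making $\partial_t u = A_t u$ parabolic (i.e.\ $\mathrm{Re}\,\sigma(A_t)(x,\xi) \le -\delta|\xi|^m$), and its coefficients are smooth on the compact set $M \times [0,T]$, there are $\delta > 0$ and $C_1$ with $\mathrm{Re}\langle \tilde A_t w, w\rangle_{L^2} \le -\tfrac{\delta}{2}\|w\|_{H^{m'}}^2 + C_1\|w\|_{L^2}^2$ for all $w$ and all $t \in [0,T]$. Granting this (translated back, $\langle A_t u_t, u_t\rangle_{H^s} \le -\tfrac{\delta}{2}\|u_t\|_{H^{s+m'}}^2 + C_1\|u_t\|_{H^s}^2$), and estimating the forcing term by $|\langle F_t, u_t\rangle_{H^s}| \le \|F_t\|_{H^{s-m'}}\|u_t\|_{H^{s+m'}} \le \tfrac{\delta}{4}\|u_t\|_{H^{s+m'}}^2 + C_2\|F_t\|_{H^{s-m'}}^2$ via Cauchy--Schwarz and Young, one arrives at
$$y'(t) \le -\tfrac{\delta}{2}\|u_t\|_{H^{s+m'}}^2 + 2C_1 y(t) + 2C_2\|F_t\|_{H^{s-m'}}^2 .$$

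From here it is bookkeeping. Dropping the negative term and applying Grönwall's inequality gives $y(t) \le e^{2C_1 T}\bigl(y(0) + 2C_2\int_0^T \|F_{t'}\|_{H^{s-m'}}^2\,dt'\bigr)$ for all $t \in [0,T]$, which is the pointwise-in-time half of the estimate and also bounds $\int_0^T y(t')\,dt'$ by $T$ times the same quantity. Integrating the displayed inequality over $[0,T]$ and discarding $-y(T) \le 0$ yields $\tfrac{\delta}{2}\int_0^T \|u_{t'}\|_{H^{s+m'}}^2\,dt' \le y(0) + 2C_1\int_0^T y(t')\,dt' + 2C_2\int_0^T \|F_{t'}\|_{H^{s-m'}}^2\,dt'$, and substituting the Grönwall bound for the middle term gives the integral half. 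Adding the two halves produces the stated inequality with $C$ depending only on $\delta$, $C_1$, $C_2$ and $T$.

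I expect the main obstacle to be precisely the uniform-in-$t$ Gårding inequality together with the commutator bounds hidden in conjugating the variable-coefficient operator $A_t$ by $\Lambda^s$ (equivalently, in commuting $\Lambda^s$ past $A_t$ inside the inner product): these generate only lower-order terms, since the coefficients are smooth and $M$ is closed, but organising the uniformity over $[0,T]$ and justifying the manipulations of $y'(t)$ under the stated regularity $u \in C^1([0,T],H^s) \cap C^0([0,T],H^{s+m'})$ — most cleanly by first proving the inequality for smooth $u$ and passing to the limit in this class — requires some care. Since the result is a standard part of the $L^2$-theory of linear parabolic equations, I would in the end simply quote it, with the energy argument above as the underlying justification.
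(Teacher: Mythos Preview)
Your proposal is correct and in fact supplies substantially more than the paper does: the paper does not prove this theorem at all but simply refers the reader to 6.5.2 in \cite{Chazarain1982}. Your energy-method sketch (reduce to $s=0$ by conjugating with $\Lambda^s$, invoke a uniform-in-$t$ G\aa rding inequality, then Gr\"onwall and integrate) is exactly the standard argument one finds in such references, and your closing remark that one would ultimately just quote the result is precisely what the paper does.
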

For a proof, see 6.5.2 in \cite{Chazarain1982}.
We will need the following estimate for solutions, derived from this inequality:
\begin{cor}
\label{PE}
For any $\delta > 0$ and any 
 $A_t$ as above, there exists $C, \tilde{C}>0$, such that for any $u_t$ a solution of
$$\partial_t u_t = A_t u_t,$$
we have
$$\int_{\delta}^T \|u_{\tau}\|_{H^r}^2d \tau \leq C \int_0^T \|u_{\tau}\|_{H^{s}}^2 d\tau,$$
as well as
$$\|u_{t}\|_{H^r}^2 \leq \tilde{C} \int_0^T \|u_{\tau}\|_{H^{s}}^2 d\tau$$
for any $r,s \in \R$ and any $t \in [\delta, T]$.
\end{cor}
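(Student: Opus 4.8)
The plan is to bootstrap the parabolic estimate from Theorem (the Chazarain–Piriou estimate quoted above) iteratively in order to gain regularity, and then combine it with an interpolation/iteration-in-time argument to move from an $L^2_t H^s_x$ bound on the initial data to a pointwise-in-time $H^r_x$ bound on any compact time interval bounded away from $t=0$. First I would observe that since $u_t$ is an exact solution, $\partial_t u_t - A_t u_t = 0$, so for any subinterval $[a,b] \subset [0,T]$ and any real $\sigma$ the quoted inequality gives
$$\|u_b\|_{H^\sigma}^2 + \int_a^b \|u_\tau\|_{H^{\sigma+m'}}^2 \, d\tau \leq C_{a,b}\, \|u_a\|_{H^\sigma}^2.$$
The point is that this upgrades an $H^\sigma$ bound at the left endpoint into an $H^{\sigma+m'}$ bound integrated over the interval, and into an $H^\sigma$ bound at every later time.

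The first claim follows by iterating this $\lceil (r-s)/m' \rceil$ times across a chain of nested subintervals $[0,T] \supset [\delta/N, T] \supset [2\delta/N, T] \supset \cdots \supset [\delta, T]$: at each stage, the integrated $L^2_t H^{\sigma+m'}_x$ control on the previous (larger) interval furnishes, by the mean value theorem in $t$, a time $t_\ast$ in the next shrinking collar at which $\|u_{t_\ast}\|_{H^{\sigma+m'}}$ is controlled by the square root of that integral; restarting the parabolic estimate from $t_\ast$ then yields $L^2_t H^{\sigma+2m'}_x$ control on $[t_\ast, T] \supseteq$ the next interval. After finitely many steps we reach regularity exponent $\geq r$, and monotonicity of Sobolev norms on the manifold (or another application of the estimate to lose the excess) gives $\int_\delta^T \|u_\tau\|_{H^r}^2\,d\tau \leq C \int_0^T \|u_\tau\|_{H^s}^2\,d\tau$, where $C$ absorbs all the constants $C_{a,b}$ from the finitely many steps.

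For the second claim I would run essentially the same chain but carry it one half-step further: once we have $\int_{\delta/2}^T \|u_\tau\|_{H^{r+m'}}^2\,d\tau$ bounded by $C\int_0^T\|u_\tau\|_{H^s}^2\,d\tau$ (obtained exactly as above with $\delta$ replaced by $\delta/2$), the mean value theorem produces some $t_1 \in [\delta/2, \delta]$ with $\|u_{t_1}\|_{H^{r+m'}}^2 \leq \tfrac{2}{\delta} \int_{\delta/2}^T \|u_\tau\|_{H^{r+m'}}^2\,d\tau$, and then the quoted estimate applied on $[t_1, t]$ at regularity $\sigma = r$ gives $\|u_t\|_{H^r}^2 \leq C_{t_1,t}\|u_{t_1}\|_{H^r}^2 \leq C_{t_1,t}\|u_{t_1}\|_{H^{r+m'}}^2$ for every $t \in [\delta, T]$, with $C_{t_1,t}$ bounded uniformly in $t \in [\delta,T]$ and $t_1 \in [\delta/2,\delta]$ by the uniform ellipticity in $t$ and smoothness of the coefficients. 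Chaining the inequalities yields $\|u_t\|_{H^r}^2 \leq \tilde C \int_0^T \|u_\tau\|_{H^s}^2\,d\tau$.

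The only mildly delicate point — really the one place where care is needed rather than the argument being purely formal — is keeping the constants uniform in the interval endpoints $t_\ast$ chosen by the mean value theorem, since a priori the constant in the quoted theorem depends on the length of the time interval and on $T$; here one uses that all the relevant intervals are sublengths of the fixed interval $[0,T]$ and that uniform ellipticity in $t$ with smooth coefficients gives a single constant valid for all of them (one may simply take the supremum of the finitely many constants arising, or invoke the theorem once on $[0,T]$ with a translated time origin and note the constant can be taken monotone in the interval length). No new ideas beyond the quoted parabolic estimate are required.
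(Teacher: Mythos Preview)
Your argument is correct and complete; the care you take with the uniformity of the constants in the endpoints $t_\ast$ is appropriate and your resolution is valid. However, your route differs from the paper's in two respects worth noting.

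For the first (integrated) estimate, the paper avoids the mean-value step entirely by a time-cutoff trick: one multiplies $u_t$ by a smooth function $f:[0,T]\to[0,1]$ with $f(0)=0$ and $f\equiv 1$ on $[\delta,T]$, so that $v_t = f(t)u_t$ has zero initial data and satisfies $\partial_t v_t - A_t v_t = (\partial_t f)(t)\,u_t$; the quoted estimate applied to $v$ then directly yields $\int_\delta^T \|u_\tau\|_{H^{s+m'}}^2\,d\tau \leq C\int_0^T \|u_\tau\|_{H^{s-m'}}^2\,d\tau$ with $C = \sup|\partial_t f|^2$, and one iterates in $s$. This sidesteps the endpoint-dependence issue you flag, at the cost of introducing an inhomogeneous right-hand side. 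For the second (pointwise) estimate, the paper does \emph{not} use the endpoint term $\|u_b\|_{H^\sigma}^2$ from the quoted theorem as you do; instead it observes that $\partial_t u = A_t u$ gives $\int_\delta^T \|\partial_\tau u_\tau\|_{H^{r-m}}^2\,d\tau$ bounded once $\int_\delta^T\|u_\tau\|_{H^r}^2\,d\tau$ is, and then invokes the Lions--Magenes type embedding $W^{1,2}([a,b];H^{l+1},H^l)\hookrightarrow C^0([a,b];H^l)$. Your approach here is arguably more self-contained, since it uses only the quoted parabolic estimate and no auxiliary embedding; the paper's approach trades that for a cleaner handling of constants.
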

\begin{proof}
For $r < s$ the inequality is trivial.
For $r > s$ the claim follows inductively from
$$\int_{\delta}^T \|u_{\tau}\|^2_{H^{s+m'}} d\tau \leq C \int_0^T \|u_{\tau}\|_{H^{s-m'}}^2 d\tau.$$
For this consider $f:[0,T] \to [0,1]$ smooth such that $f(0) = 0, f(\delta) = 1$.
Then
$$\partial_t (f(t) u_t) - A_t u_t = (\partial_t f(t)) u_t.$$
Hence the above estimate yields
$$\int_{\delta}^T \|u_{\tau}\|_{s+m'}^2 d\tau \leq C \int_0^T \|u_{\tau}\|_{s-m'}^2 d\tau,$$
where $C = \max |\partial_t f|$.

We have shown that
$$\int_{\delta}^T \|u_{\tau}\|_{H^r}^2 d\tau \leq \tilde{C} \int_0^T \|u_{\tau}\|_{H^s}^2 d\tau.$$
Since $\partial_t u_t = A_t u_t$ and $u_t$ is a differential operator of order $m$ this implies
$$\int_{\delta}^T \|\partial_{\tau} u_{\tau}\|^2_{H^{r-m}} d\tau \leq  \tilde{C} \int_0^T \|u_{\tau}\|^2_{H^s} d\tau$$
and hence by the Sobolev embedding $W^{1,2}([a,b]; H^{l+1}, H^l) \hookrightarrow C^0([a,b]; H^l)$ (cf. \cite{Cherrier12}, Theorem 1.7.4 and (1.7.62))  we conclude
$$\|u_t\|_{H^{r-m}} \leq \hat{C} \int_0^T \|u_{\tau}\|^2_{H^s} d\tau.$$
(Here
$$W^{1,2}([a,b]; H^{l+1}, H^l) = L^2([a,b]; H^{l+1}) \cap \{u: [a,b] \to H^l : \partial_t u \in L^2([a,b]; H^l) \}$$
with the obvious norm.)
\end{proof}

\section{The Łojasiewicz inequality and gradient estimates}
The Łojasiewicz inequality relates the norm of the gradient of a differentiable function to its value near a critical point in a way that allows us to show convergence of the gradient flow. There are two situations when Łojasiewicz inequalities are known to hold. The optimal situation is when the function is a Morse function or less restrictively a Morse--Bott function. Then we have
$$|f(x) - f(x_0)| \leq C \|\grad f(x)\|^2$$
for $x_0$ a critical point of $f$ and some constant $C > 0$. This can be easily seen by applying the Morse--Bott lemma: near a critical manifold we may write a Morse--Bott function as
$$f(x_1, ..., x_n) = c + x_1^2 + ... + x_r^2 - x_{r+1}^2 - ... - x_s^2,$$
where $(x_1, ..., x_n)$ are coordinates with $x_0$ at the origin and critical manifold $\{x_{s+1} = ... = x_{n} = 0\}$.
Because in a small neighborhood the Riemannian metric is very close to being Euclidean, we get the inequality
$$|f(x) - c| \leq C |\grad f(x)|^2$$
for some $C > 0$.
The other case is that $f$ is analytic. Then there exists $\theta \in (1,2)$, such that
$$|f(x) - f(x_0)| \leq \|\grad f(x)\|^{\theta}.$$
We will make use of both versions. The inequality for analytic functions is a difficult theorem in the theory of semianalytic sets, due to Łojasiewicz. The first version will be employed to demonstrate stability of parallel spinors, since there we know $\mc{E}$ to be Morse--Bott. For volume constrained critical points we do not know this and instead use the weaker inequality for analytic functions.
Both inequalities are known in this general form only for functions on finite dimensional domains. We will spend most of the rest of the section justifying these inequalities for the spinor energy functional.

\begin{prop}[Optimal Łojasiewicz inequality for parallel spinors]
\label{LIP}
Let $\bar{\Phi}$ be a critical point of $\mc{E}$. (Hence $\bar{\Phi}$ is an absolute minimiser with $\mc{E}(\bar{\Phi}) = 0$.) Then there exists a $C^{2,\alpha}$ neighborhood $U$ of $\bar{\Phi}$ and some constant $C>0$, such that for any $\Phi \in U$ we have
$$\mc{E}(\Phi) \leq C \|Q(\Phi)\|_{L^2}^2.$$
\end{prop}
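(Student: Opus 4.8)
The plan is to reduce the claimed inequality to a finite-dimensional statement via the slice construction, and then to exploit the Morse--Bott structure of the critical set, which is legitimate because $\mc{E}$ has smooth critical set (as proven in \cite{Ammann2015b}). Concretely, I would first invoke the slice Proposition: after acting by a suitable diffeomorphism $F$ we may assume the nearby point $\Phi$ lies in the set $\{\lambda_{\bar\Phi}(\Xi^{-1}(\cdot)) = 0\}$, and since $\mc{E}$ and $\|Q\|_{L^2}$ are both $\widehat{\Diff}_s(M)$-invariant, it suffices to prove the inequality for $\Phi$ in this slice. On the slice, the Bianchi identity $\lambda_{g,\varphi}Q(g,\varphi)=0$ together with the exact sequence at a critical point means that $Q$, restricted to $\ker\lambda$, behaves like the gradient of the restriction of $\mc{E}$ to the slice, whose Hessian is (by the exact sequence) an elliptic self-adjoint operator $L_{\bar\Phi}$ with finite-dimensional kernel equal to $T_{\bar\Phi}(\text{crit set})$.

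Next I would set up the standard Lyapunov--Schmidt decomposition. Write $\Xi^{-1}(\Phi) = v = v' + v''$, where $v''$ lies in the finite-dimensional kernel $K = \ker L_{\bar\Phi} \cap \ker\lambda_{\bar\Phi}$ and $v'$ in its $L^2$-orthogonal complement within $\ker\lambda_{\bar\Phi}$. Using that $L_{\bar\Phi}$ is invertible on the complement of $K$ and elliptic, the implicit function theorem yields a smooth map $v'' \mapsto w(v'')$ such that the "$v'$-component of $Q$" vanishes precisely when $v' = w(v'')$; because the critical set is smooth and of dimension $\dim K$, the graph of $w$ is exactly the critical set near $\bar\Phi$. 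One then shows the two elementary estimates: for the "infinite-dimensional/transverse" direction, ellipticity and invertibility of $L_{\bar\Phi}$ give $\mc{E}(\Phi) - \mc{E}(\Phi^{\flat}) \le C\|Q(\Phi)\|_{L^2}^2$, where $\Phi^{\flat}$ has the same kernel-component $v''$ but transverse component $w(v'')$; for the "finite-dimensional" direction, $\Phi^{\flat}$ is a genuine critical point, so $\mc{E}(\Phi^{\flat}) = 0$. Combining these gives $\mc{E}(\Phi) \le C\|Q(\Phi)\|_{L^2}^2$, which is the desired optimal (exponent-$2$) Łojasiewicz inequality.

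There are two technical points worth flagging. First, the regularity bookkeeping: the slice Proposition produces the gauge-fixing diffeomorphism only in $C^{k+2,\alpha}$ from a $C^{k+1,\alpha}$-nearby point, so one must check that a $C^{2,\alpha}$-neighborhood suffices and that the various norms of $v$, $Q(\Phi)$ and the error terms are controlled in compatible Hölder or Sobolev spaces; elliptic regularity for $L_{\bar\Phi}$ upgrades regularity as needed. Second, one must verify the normalization constraint $|\varphi|\equiv 1$ is compatible with the decomposition — i.e. that $Q$ and $L_{\bar\Phi}$ are taken on $\mc{N}$, with $T_{\bar\Phi}\mc{N} = \Gamma(\odot^2 T^*M)\oplus\bar\varphi^\perp$, so that the fiberwise unit-length condition is automatically respected by the flow and the slice.

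The main obstacle I expect is making the "Morse--Bott reduction" fully rigorous in infinite dimensions: unlike the finite-dimensional Morse--Bott lemma quoted in the text, here one cannot literally change coordinates to a quadratic normal form, so the argument must instead be phrased as a quantitative estimate built directly from (i) the spectral gap of the elliptic operator $L_{\bar\Phi}$ away from its finite-dimensional kernel, (ii) the smoothness of the critical set from \cite{Ammann2015b} to control the nonlinear remainder, and (iii) Taylor expansion of $\mc{E}$ along the slice with remainder controlled in the appropriate norm. Getting the norms to close — in particular estimating the nonlinearity $Q(\Phi) - L_{\bar\Phi}\Xi^{-1}(\Phi)$ by $o(\|\Xi^{-1}(\Phi)\|)$ in $L^2$ while only assuming $C^{2,\alpha}$-closeness — is the delicate part.
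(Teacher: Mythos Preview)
Your approach is correct and coincides in spirit with the paper's, but the packaging differs: rather than carrying out the Lyapunov--Schmidt / Morse--Bott reduction by hand, the paper quotes an abstract infinite-dimensional \L ojasiewicz theorem of Colding--Minicozzi (Theorem~\ref{CML}) and then verifies its hypotheses for $f = \mc{E}\circ\Xi_{\bar\Phi}\circ\iota$ on the slice $\ker\lambda_{\bar\Phi}$. The hypotheses are precisely the ones you identify---smoothness of the critical set (from \cite{Ammann2015b}), a Lipschitz bound on $\grad f$, and Fredholmness of $L = D\grad f(0)$---and the conclusion is the optimal exponent-$2$ inequality on the slice. The upgrade from the slice inequality to the full inequality then proceeds exactly as you describe, via diffeomorphism invariance of $\mc{E}$ and $\|Q\|_{L^2}$ together with the slice theorem. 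So the ``main obstacle'' you flag (making Morse--Bott rigorous in infinite dimensions) the paper sidesteps by citation; your direct route would reprove the relevant special case of Colding--Minicozzi, which is more work but also more self-contained.

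One point in your sketch deserves tightening: you assert that the Hessian on the slice is ``(by the exact sequence) an elliptic self-adjoint operator''. The exact sequence at a critical point gives the block structure of $DQ(\bar\Phi)$ with respect to $\ker\lambda_{\bar\Phi}\oplus\im\lambda_{\bar\Phi}^*$, but it does not by itself yield ellipticity or Fredholmness of the slice block. The paper obtains Fredholmness by comparing with the DeTurck-modified operator $D\tilde Q_{\bar g}(\bar\Phi)$, which \emph{is} strongly elliptic and differs from $DQ(\bar\Phi)$ only in the $\im\lambda_{\bar\Phi}^*$-block: since $D\tilde Q_{\bar g}(\bar\Phi)$ is Fredholm with block form $\bigl(\begin{smallmatrix} P & 0 \\ 0 & R\end{smallmatrix}\bigr)$, so is the slice block $P$. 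You should insert this DeTurck argument wherever you invoke ellipticity or the spectral gap of $L_{\bar\Phi}$.
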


\begin{prop}[Łojasiewicz inequality for volume constrained critical points]
\label{LIVC}
Let $\bar{\Phi} = (\bar{g},\bar{\varphi})$ be a volume constrained critical point of $\mc{E}$. Then there exists a $C^{2,\alpha}$ neighborhood $U$ of $\bar{\Phi}$ and some constant $\theta \in (1,2)$, such that for any $\Phi=(g,\varphi)$ with $\int_M \vol_g = \int_M \vol_{\bar{g}}$ we have
$$|\mc{E}(\Phi) - \mc{E}(\bar{\Phi})| \leq \|\mathring{Q}(\Phi)\|_{L^2}^{\theta}.$$
If the set of volume constrained critical points near $\bar{\Phi}$ is a manifold, this can be improved to
$$|\mc{E}(\Phi) - \mc{E}(\bar{\Phi})| \leq C \|\mathring{Q}(\Phi)\|_{L^2}^2.$$
\end{prop}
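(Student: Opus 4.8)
\emph{Strategy.} I would prove this as a Łojasiewicz--Simon gradient inequality for $\mc{E}$ restricted to $\mc{N}^1$, reducing it to the classical finite-dimensional Łojasiewicz inequality by a Lyapunov--Schmidt reduction performed inside a gauge slice. After rescaling $\bar{\Phi}$ so that $\int_M\vol_{\bar{g}}=1$, work in the chart $\Xi=\Xi_{\bar{\Phi}}$ completed in $C^{k,\alpha}$ (or $H^s$) with $k$ large, so that $\mc{E}$ is an honest function on a neighborhood of $0$ in $\Gamma(\odot^2 T^*M)\oplus\Gamma(\Sigma_{\bar{g}}M)$, $\mc{N}^1$ is a Banach submanifold, and $\mathring{Q}$ is its negative $L^2$-gradient. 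The first point is that $\mc{E}$ is real-analytic in this chart and $\mathring{Q}$ a real-analytic map of Banach spaces: every ingredient of $\mc{E}$ arises from real-analytic operations --- the Bourguignon--Gauduchon isomorphism $\hat{B}^{\bar{g}}_{\bar{g}+h}$ is the fibrewise, spin-lifted square root of the positive symmetric endomorphism attached to $(\bar{g},\bar{g}+h)$, hence analytic in $h$, and the inverse metric, the Christoffel symbols of $\bar{g}+h$ and the connection Laplacian are polynomial in these data and in the analytic map $g\mapsto g^{-1}$. This step is routine but must be carried out keeping track of the spinorial identifications.

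\emph{Gauge-fixing and Fredholmness.} Since $\mc{E}$, $\mathring{Q}$ and $\vol_g$ are all $\widehat{\Diff}_s(M)$-invariant and $\bar{g}$ has no Killing fields, the slice theorem lets us replace any $\Phi$ near $\bar{\Phi}$ with $\int_M\vol_g=\int_M\vol_{\bar{g}}$ by $F^*\Phi$ satisfying $\lambda_{\bar{\Phi}}(\Xi^{-1}(F^*\Phi))=0$, without changing $\mc{E}(\Phi)$, the volume, or $\|\mathring{Q}(\Phi)\|_{L^2}$; so it suffices to prove the inequality on the slice $\mc{S}=\{\Phi\in\mc{N}^1:\lambda_{\bar{\Phi}}(\Xi^{-1}\Phi)=0\}$. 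The Bianchi-type identity gives $\lambda_{\Phi}\mathring{Q}(\Phi)=0$ for every $\Phi$ (it holds for $Q$, and $\lambda_{\Phi}$ annihilates pure-trace tensors $(cg,0)$ by the divergence theorem), so the $L^2$-gradient of $\mc{E}|_{\mc{S}}$ at $\Phi$ is the $T_{\Phi}\mc{S}$-projection of $\mathring{Q}(\Phi)$; since $T_{\Phi}\mc{S}$ and $\ker\lambda_{\Phi}$ both tend to $\ker\lambda_{\bar{\Phi}}$ as $\Phi\to\bar{\Phi}$, one gets $\tfrac12\|\mathring{Q}(\Phi)\|_{L^2}\le\|\grad(\mc{E}|_{\mc{S}})(\Phi)\|_{L^2}\le 2\|\mathring{Q}(\Phi)\|_{L^2}$ on a neighborhood. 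On $\mc{S}$ the Hessian $H=D(\grad\mc{E}|_{\mc{S}})(\bar{\Phi})$ is, up to finite-rank corrections from the constraints $|\varphi|=1$ and $\int_M\vol_g=1$, the restriction of the weakly elliptic operator $L_{\bar{\Phi}}$ to $\ker\lambda_{\bar{\Phi}}$, where it is genuinely elliptic and formally self-adjoint, hence Fredholm of index $0$ with finite-dimensional kernel $K$.

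\emph{Finite-dimensional reduction.} Split $T_{\bar{\Phi}}\mc{S}=K\oplus K^{\perp}$ with projections $P,P^{\perp}$. As $P^{\perp}HP^{\perp}$ is invertible on $K^{\perp}$, the analytic implicit function theorem gives a real-analytic $\phi:K\supset B_r\to K^{\perp}$ with $\phi(0)=0$, $D\phi(0)=0$ and $P^{\perp}\grad(\mc{E}|_{\mc{S}})(v+\phi(v))=0$; the reduced function $\gamma$ --- the restriction of $\mc{E}$ to the graph $\{v+\phi(v):v\in B_r\}\subset\mc{S}$ --- is real-analytic on the finite-dimensional ball $B_r$ with $\nabla\gamma(v)=P\grad(\mc{E}|_{\mc{S}})(v+\phi(v))$. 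The classical Łojasiewicz inequality gives $\theta_0\in(1,2)$ with $|\gamma(v)-\gamma(0)|\le\|\nabla\gamma(v)\|^{\theta_0}$ near $0$, and Simon's comparison estimates (invertibility of $P^{\perp}HP^{\perp}$ to control the $K^{\perp}$-component of the gradient, plus a Taylor expansion of $\mc{E}$ along $\mc{S}$) give $|\mc{E}(\Phi)-\gamma(v)|\le C\|\grad(\mc{E}|_{\mc{S}})(\Phi)\|_{L^2}^2$ and $\|\nabla\gamma(v)\|\le C\|\grad(\mc{E}|_{\mc{S}})(\Phi)\|_{L^2}$, where $v\in K$ is the $K$-component of the slice representative of $\Phi$. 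Combining with the comparison between $\grad(\mc{E}|_{\mc{S}})$ and $\mathring{Q}$, and shrinking $\theta$ below $\theta_0$ to absorb the constant (allowed since $\|\mathring{Q}\|_{L^2}<1$ near $\bar{\Phi}$), yields $|\mc{E}(\Phi)-\mc{E}(\bar{\Phi})|\le\|\mathring{Q}(\Phi)\|_{L^2}^{\theta}$. If the critical set of $\mc{E}$ near $\bar{\Phi}$ is a manifold, gauge-fixing identifies it with $\mathrm{Crit}(\mc{E}|_{\mc{S}})$, which Lyapunov--Schmidt identifies with $(\nabla\gamma)^{-1}(0)\subset K$; since $D\nabla\gamma(0)=H|_K=0$ and this set has the expected dimension $\dim K$, it is open in $K$, so $\nabla\gamma\equiv 0$ near $0$ by analyticity, $\gamma$ is locally constant, and the inequality holds with $\theta=2$. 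I expect the main obstacle to be the analyticity of the first paragraph together with the gauge-fixing bookkeeping --- proving real-analyticity through the Bourguignon--Gauduchon identifications in the chosen spaces, and verifying that gauge-fixing preserves $\|\mathring{Q}(\cdot)\|_{L^2}$ up to comparable constants while respecting both the unit-spinor and unit-volume constraints; for the improved exponent, a further delicate point is justifying that the critical set, assumed to be a manifold, has tangent space exactly $\ker H$, so that $\gamma$ is genuinely constant.
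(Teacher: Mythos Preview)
Your approach is essentially the paper's, expressed at a different level of abstraction. The paper works on the same slice $V_0=\{(h,\psi)\in\ker\lambda_{\bar\Phi}:\int_M\tr_{\bar g}h\,\vol_{\bar g}=0\}$, pulls $\mc{E}$ back via an analytic parametrization $\Psi:V_0\to\mc{N}^1$, and then invokes a packaged infinite-dimensional Łojasiewicz--Simon theorem (Theorem~\ref{CML}, after Colding--Minicozzi) whose hypotheses are precisely analyticity of the functional and Fredholmness of the linearized gradient on the slice --- the latter obtained, as in your sketch, from the block decomposition of $D\tilde Q_{\bar\Phi}(\bar\Phi)$ with respect to $\ker\lambda_{\bar\Phi}\oplus\im\lambda_{\bar\Phi}^*$. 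Your Lyapunov--Schmidt reduction is exactly the mechanism inside that theorem, so the two arguments coincide once unpacked; the paper's route is shorter only because it outsources the reduction, while yours makes the finite-dimensional step and Simon's comparison estimates explicit.

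Two remarks. First, your explicit invocation of the slice theorem (and hence of the no-Killing-fields hypothesis) to pass from an arbitrary $\Phi\in\mc{N}^1$ to the slice is correct and in fact needed: the paper's proof writes $\Psi^{-1}(\Phi)$ without comment, which only makes sense after this step, and the proposition as stated omits the hypothesis although every application in the paper carries it. Second, the ``delicate point'' you flag in the $\theta=2$ case --- that the critical manifold must have tangent space exactly $\ker H$ for the reduced function $\gamma$ to be Morse--Bott --- is a genuine issue that the paper's remark after Theorem~\ref{CML} glosses over; merely assuming the critical set is a manifold is not enough (e.g.\ $\gamma(x,y)=x^3$ on $\R^2$), and one really needs the dimension to equal $\dim K$, i.e.\ integrability of infinitesimal critical deformations. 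A minor quibble: $\lambda_{\Phi}$ kills $(cg,0)$ because $\delta_g g=0$ (the metric is parallel), not by the divergence theorem.
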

The proofs of both propositions rely on the following infinite-dimensional form of the Łojasiewicz inequality, due to Colding and Minicozzi II, see \cite{Colding2013}.
\begin{thm}
\label{CML}
\begin{enumerate}
\item Suppose $E \subset L^2$ is a closed subspace, $U$ is an open neighborhood of $0$ in $C^{2, \beta} \cap E$.
\item Suppose $G: U \to \R$ is an analytic function or that there is a neighborhood $V$ of $0$, such that $\{x \in V : \grad G (x) = 0\}$ is a finite dimensional submanifold.
\item Suppose the gradient $\grad G : U \to C^{\beta} \cap E$ is $C^1$, $\grad G (0) = 0$ and
$$\|\grad G(x) - \grad G(y) \|_{L^2} \leq C \|x - y\|_{H^2}$$
\item $L = D \grad G (0)$ is symmetric, bounded from $C^{2, \beta} \cap E$ to $C^{\beta} \cap E$ and from $H^2 \cap E$ to $L^2 \cap E$ and Fredholm from $C^{2, \beta} \cap E$ to $C^{\beta} \cap E$.
\end{enumerate}
Then there exists $\theta \in (1,2)$ so that for all $x\in E$ sufficiently small
$$|G(x) - G(0)| \leq \|\grad G(x)\|_{L^2}^\theta$$
If there is a neighborhood $V$ of $0$, such that $\{x \in V : \grad G (x) = 0\}$ is a finite dimensional submanifold, we get the stronger inequality
$$|G(x) - G(0)| \leq C \|\grad G(x)\|_{L^2}^2$$
for some $C>0$.
\end{thm}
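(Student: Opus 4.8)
The plan is to reduce the statement to the classical finite-dimensional Łojasiewicz gradient inequality by a Lyapunov--Schmidt reduction, in the form Colding--Minicozzi gave to Simon's argument. Since $L=D\grad G(0)$ is symmetric and Fredholm, its kernel $K\subset E$ is finite-dimensional and, by elliptic regularity, consists of smooth sections; hence the $L^2$-orthogonal projection $\Pi$ onto $K$ restricts to a bounded projection on $C^{2,\beta}\cap E$, on $H^2\cap E$ and on $C^\beta\cap E$. Symmetry gives $(I-\Pi)L=L$, and $L$ restricts to isomorphisms $K^\perp\cap C^{2,\beta}\to K^\perp\cap C^\beta$ and $K^\perp\cap H^2\to K^\perp\cap L^2$; in particular $\|w\|_{H^2}\le C\|Lw\|_{L^2}$ for $w\in K^\perp\cap H^2$.

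I would first solve the ``$K^\perp$-part'' of the equation $\grad G=0$. The map $\mathcal F(\kappa,y):=(I-\Pi)\grad G(\kappa+y)$, defined near $(0,0)$ in $K\times(K^\perp\cap C^{2,\beta})$, satisfies $\mathcal F(0,0)=0$ and $D_y\mathcal F(0,0)=L|_{K^\perp}$, an isomorphism; the implicit function theorem therefore yields a $C^1$ map $\kappa\mapsto y(\kappa)\in K^\perp\cap C^{2,\beta}$ with $y(0)=0$ and $\mathcal F(\kappa,y(\kappa))=0$, which the analytic implicit function theorem makes analytic when $G$ is. Put $x'=x'(x):=\Pi x+y(\Pi x)$ and define the reduced function $f(\kappa):=G(\kappa+y(\kappa))$ on a neighbourhood of $0$ in the finite-dimensional space $K$, so $f(0)=G(0)$. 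Two facts are then decisive: (i) by construction $\grad G(\kappa+y(\kappa))\in K$, and the chain-rule correction in $df(\kappa)$ pairs this element of $K$ with $Dy(\kappa)v\in K^\perp$ and hence vanishes, so $\grad f(\kappa)=\grad G(\kappa+y(\kappa))$ and $|\grad f(\kappa)|=\|\grad G(\kappa+y(\kappa))\|_{L^2}$; (ii) $x-x'\in K^\perp$ while $\grad G(x')\in K$, so $\langle\grad G(x'),x-x'\rangle_{L^2}=0$ and the first-order term in the Taylor expansion of $G$ about $x'$ drops out.

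Applying the classical Łojasiewicz inequality to the analytic function $f$ gives $|f(\kappa)-f(0)|\le C|\grad f(\kappa)|^{\theta}$ for some $\theta\in(1,2)$; if instead the zero set of $\grad G$ near $0$ is a submanifold of dimension $\dim\ker L$ (i.e.\ the kernel is integrable), then the reduced critical set $\{\grad f=0\}$ is open in $K$, so $f$ is constant near $0$ and this term vanishes. Now $|G(x)-G(0)|\le|G(x)-f(\Pi x)|+|f(\Pi x)-f(0)|$. By (ii) and the Lipschitz bound $\|\grad G(a)-\grad G(b)\|_{L^2}\le C\|a-b\|_{H^2}$ one has $|G(x)-f(\Pi x)|=|G(x)-G(x')|\le C\|x-x'\|_{H^2}^2$, while by (i), the same Lipschitz bound, and $\|\grad f(\Pi x)\|_{L^2}=\|\Pi\grad G(x')\|_{L^2}\le\|\grad G(x)\|_{L^2}+C\|x-x'\|_{H^2}$, one gets $|f(\Pi x)-f(0)|\le C(\|\grad G(x)\|_{L^2}+\|x-x'\|_{H^2})^{\theta}$. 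It remains to prove $\|x-x'\|_{H^2}\le C\|(I-\Pi)\grad G(x)\|_{L^2}\le C\|\grad G(x)\|_{L^2}$; granting this, for $x$ small (so $\|\grad G(x)\|_{L^2}$ small) the $\theta$-power dominates the square, and absorbing the constant by slightly decreasing $\theta$ yields $|G(x)-G(0)|\le\|\grad G(x)\|_{L^2}^{\theta}$, with the clean $|G(x)-G(0)|\le C\|\grad G(x)\|_{L^2}^2$ in the submanifold case.

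The main obstacle is exactly this last estimate, together with the tension between the $C^{2,\beta}$-topology, in which $G$ is twice differentiable, and the $L^2$-topology, in which the conclusion is phrased; bridging it is the point of the Colding--Minicozzi refinement. Writing $w:=x-x'$, from $(I-\Pi)\grad G(x')=0$ and $\grad G(x)-\grad G(x')=Lw+R(w)$ with $R(w):=\int_0^1(D\grad G(x'+tw)-L)w\,dt$ one gets $Lw=(I-\Pi)\grad G(x)-(I-\Pi)R(w)$. Here $D\grad G(p)-L$ is a second-order differential operator whose coefficients depend continuously on $p\in C^{2,\beta}$ and vanish at $p=0$; since $x'$ and $w$ are small in $C^{2,\beta}$ whenever $x$ is (using $y(0)=0$ and continuity of $y$), these coefficients are uniformly $C^0$-small there, so $\|R(w)\|_{L^2}\le\varepsilon\|w\|_{H^2}$ with $\varepsilon\to0$ as $x\to0$. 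Combining with $\|w\|_{H^2}\le C\|Lw\|_{L^2}$ and absorbing the term $C\varepsilon\|w\|_{H^2}$ on the left finishes the estimate, and with it the proof.
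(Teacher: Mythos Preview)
The paper does not supply its own proof of this theorem: it attributes the analytic case to Colding--Minicozzi \cite{Colding2013} and, in the remark immediately following the statement, says only that the submanifold variant ``is the same except that when the finite dimensional Łojasiewicz inequality is used, we instead invoke the stronger inequality for Morse--Bott functions.'' Your sketch is precisely the Simon/Colding--Minicozzi Lyapunov--Schmidt argument that the paper is citing, so there is no alternative approach to compare against; you have reconstructed the intended proof.

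One point is worth flagging. In the submanifold case you add the hypothesis that the critical manifold has dimension $\dim\ker L$ (``the kernel is integrable''), which is not literally part of the theorem as the paper states it. You are right to be cautious. Without that assumption the reduced function $f$ on $K$ has identically vanishing Hessian at $0$ but a critical set that may be a \emph{proper} submanifold of $K$, and then $f$ need not satisfy the exponent-$2$ inequality: take $E=\R^2$ and $G(x_1,x_2)=x_1^4$, so that $L=0$, $K=\R^2$, the critical set $\{x_1=0\}$ is a one-dimensional submanifold, yet $|G|/\|\grad G\|^2 = 1/(16x_1^2)\to\infty$. The paper's own remark that the hypothesis is ``essentially that $G$ is Morse--Bott at $0$,'' together with the fact that in the application the critical set has the correct dimension by \cite{Ammann2015b}, make clear that your reading is the intended one; the statement as literally written is slightly too weak, and your added clause repairs it.
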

{\em Remark.} Colding and Minicozzi II prove this for $G$ analytic. The alternative condition we give is essentially that $G$ is Morse--Bott at $0$. The proof in that case is the same except that when the finite dimensional Łojasiewicz inequality is used, we instead invoke the stronger inequality for Morse--Bott functions.

Since this theorem requires the linearisation of the gradient to be Fredholm we will be working on a slice of the spin diffeomorphism group. 
\begin{lemma}
Let $\bar{\Phi} = (\bar{g}, \bar{\varphi})$ be a critical point. Let $\iota: \ker \lambda_{\bar{\Phi}} \to \Gamma(\Sigma M)$ be the inclusion. $f = \mc{E} \circ \Xi_{\bar{\Phi}} \circ \iota$ fulfills the conditions of theorem \ref{CML}.
In particular we have
$$|f(x)| \leq C \|\grad f(x)\|_{L^2}^2$$
\end{lemma}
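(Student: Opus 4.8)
The plan is to check, one at a time, the four hypotheses of Theorem~\ref{CML} with $G=f$ and $E$ the $L^2$-closure of $\ker\lambda_{\bar\Phi}$, where $\ker\lambda_{\bar\Phi}$ is taken inside $T_{\bar\Phi}\mc N=\Gamma(\odot^2 T^*M)\oplus\Gamma(\bar\varphi^\perp)$, and where (as we use throughout) $\bar g$ has no Killing fields and $\Xi_{\bar\Phi}$ is tacitly composed with the fibrewise normalisation $\Phi\mapsto\Phi/|\Phi|$, so that it becomes a chart of $\mc N$ around $\bar\Phi$ whose differential at $0$ is the identity in the conventions fixed in the construction of the horizontal distribution. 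Hypothesis (1) is then immediate: the pointwise condition $\langle\bar\varphi,\cdot\rangle\equiv 0$ cuts out a closed subspace of $L^2$, and the distributional kernel of the first-order operator $\lambda_{\bar\Phi}$ is closed, so $E$ is a closed subspace of $L^2(\odot^2 T^*M\oplus\Sigma_{\bar g}M)$ and $C^{2,\beta}\cap E$ is the space of those $C^{2,\beta}$ sections $x$ with $\lambda_{\bar\Phi}x=0$.

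For hypothesis (3) I would first observe that, near $0$, $\grad f$ is the $L^2$-orthogonal projection $\Pi_E$ of $-Q$ read off in the chart, up to smooth corrections coming from the chart differential and the moving $L^2$ metric which reduce to the identity at $0$. Here $\Pi_E=\id-\lambda^*_{\bar\Phi}(\lambda_{\bar\Phi}\lambda^*_{\bar\Phi})^{-1}\lambda_{\bar\Phi}$; since $\bar g$ has no Killing fields, $\lambda_{\bar\Phi}\lambda^*_{\bar\Phi}$ is invertible (as in the slice theorem), so $\Pi_E$ is a pseudodifferential operator of order $0$ and is bounded on every $H^s$ and $C^{k,\alpha}$. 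As $Q$ is a quasilinear second-order operator depending analytically on $(g,\varphi)$, this makes $\grad f$ a $C^1$ map $U\to C^\beta\cap E$ with $\grad f(0)=\Pi_E(-Q(\bar\Phi))=0$ (because $\bar\Phi$ is critical), and $\|\grad f(x)-\grad f(y)\|_{L^2}\le C\|x-y\|_{H^2}$ follows from local Lipschitz continuity of $Q$ from $H^2$ to $L^2$. For hypothesis (4), differentiating the Bianchi identity $\lambda_\Phi Q(\Phi)=0$ at $\bar\Phi$ gives $\lambda_{\bar\Phi}\circ L_{\bar\Phi}=0$, i.e. $\im L_{\bar\Phi}\subseteq E$; since moreover $D\mc E(\bar\Phi)=0$ kills the second-order chart terms, one obtains $L:=D\grad f(0)=-L_{\bar\Phi}|_E$ with $L_{\bar\Phi}=DQ(\bar\Phi)$. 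This operator is $L^2$-symmetric, being the linearisation of an $L^2$-gradient, and is obviously bounded $C^{2,\beta}\cap E\to C^\beta\cap E$ and $H^2\cap E\to L^2\cap E$. The real point is the Fredholm property: although $L_{\bar\Phi}$ is only weakly elliptic, with symbol kernel exactly $\im\sigma(\lambda^*_{\bar\Phi})$, the exactness of the deformation complex $0\to\Gamma(TM)\xrightarrow{\lambda^*_{\bar\Phi}}\cdots\xrightarrow{\lambda_{\bar\Phi}}\Gamma(TM)\to 0$ displayed earlier forces $\sigma(L_{\bar\Phi})(\xi)$, for $\xi\neq 0$, to restrict to an isomorphism of $\ker\sigma(\lambda_{\bar\Phi})(\xi)$ — equivalently, this is the strong parabolicity of the DeTurck-gauged operator $\tilde Q$. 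Hence $L|_E$ is elliptic, and since $E$ is complemented by the zeroth-order elliptic projector $\Pi_E$, it is Fredholm from $C^{2,\beta}\cap E$ to $C^\beta\cap E$ (and self-adjoint Fredholm from $H^2\cap E$ to $L^2\cap E$).

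Finally, for hypothesis (2): the integrand of $\mc E(g,\varphi)=\tfrac12\int_M|\nabla^g\varphi|^2\vol_g$ is a real-analytic algebraic expression in $g,\partial g,\varphi,\partial\varphi$, and the chart maps $\Xi_{\bar\Phi}$, the normalisation and $\iota$ are analytic, so $f$ is analytic; this already yields the Łojasiewicz inequality with some exponent $\theta\in(1,2)$. To upgrade to the optimal exponent $2$ I would verify the alternative hypothesis, namely that $\{x:\grad f(x)=0\}$ is a finite-dimensional submanifold near $0$. For small $x$ one has $\grad f(x)=0$ exactly when $\Xi_{\bar\Phi}(x)$ is a critical point of $\mc E$, i.e. a pair of a Ricci-flat metric with a parallel spinor — the implication "$Q(\Xi_{\bar\Phi}(x))=0\Rightarrow\grad f(x)=0$" is clear, and the converse holds because for small $x$ the orthogonal projection of $\ker\lambda_{\Xi_{\bar\Phi}(x)}$ onto the slice is close to the identity. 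By the smoothness of the premoduli space of parallel spinors established in \cite{Ammann2015b}, together with the slice theorem (using again that $\bar g$ has no Killing fields), the set of such pairs lying in the slice $\Xi_{\bar\Phi}(\ker\lambda_{\bar\Phi})$ is a finite-dimensional smooth manifold through $\bar\Phi$. Theorem~\ref{CML} then gives $|f(x)|=|\mc E(\Xi_{\bar\Phi}(x))-\mc E(\bar\Phi)|\le C\|\grad f(x)\|_{L^2}^2$.

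The hard part is hypothesis (4): converting the merely weak ellipticity of $L_{\bar\Phi}=DQ(\bar\Phi)$ into genuine ellipticity, hence Fredholmness, after restriction to the slice $\ker\lambda_{\bar\Phi}$. This is precisely why the whole argument has to be run on a slice and is where the deformation complex and the DeTurck trick are indispensable. A secondary technical nuisance is the bookkeeping that identifies $\grad f$ with the slice-projection of $-Q$ through the chart and the retraction onto $\mc N$, and the verification that, on a sufficiently small neighbourhood, the critical set of $f$ coincides with the set of parallel-spinor pairs in the slice, so that \cite{Ammann2015b} can be invoked in its sharp (Morse--Bott) form.
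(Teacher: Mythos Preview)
Your proof is correct and follows essentially the same route as the paper: verify the hypotheses of Theorem~\ref{CML} on the slice $\ker\lambda_{\bar\Phi}$, invoking \cite{Ammann2015b} for the smoothness of the critical set and the strong ellipticity of the DeTurck-gauged operator $\tilde Q$ for the Fredholm property of $L=D\grad f(0)$. The only difference worth noting is in the Fredholm step: the paper writes $D\tilde Q_{\bar\Phi}(\bar\Phi)$ in block form with respect to $\ker\lambda_{\bar\Phi}\oplus\im\lambda^*_{\bar\Phi}$ and reads off directly that the upper-left block $P=D\grad f(0)$ is Fredholm because the full operator is, whereas you argue via the principal symbol and the order-zero pseudodifferential projector $\Pi_E=\id-\lambda^*_{\bar\Phi}(\lambda_{\bar\Phi}\lambda^*_{\bar\Phi})^{-1}\lambda_{\bar\Phi}$ --- the block route is slightly cleaner (and does not need the no-Killing-fields assumption you invoke for your formula for $\Pi_E$), since $E=\ker\lambda_{\bar\Phi}$ is not the space of sections of a subbundle and ``$L|_E$ is elliptic'' requires exactly the interpretation your $\Pi_E$ supplies.
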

\begin{proof}
We equip $\Gamma(\odot^2 T^* M) \oplus \Gamma(\Sigma_g M)$ with the $L^2$ metric induced by $\bar{g}$, and similarly we define the $C^{2,\alpha}$ norm in terms of $\bar{g}$.
Then clearly $\mc{E} \circ \Xi_{\bar{\Phi}} \circ \iota$ is a smooth function and by \cite{Ammann2015b} its critical set is smooth, thus the second condition in theorem \ref{CML} is fulfilled.
Moreover $0$ corresponds to $\bar{\Phi}$ and hence is a critical point, i.e. $\grad f(0) = 0$.
The gradient of $f$ can be considered as a nonlinear second order differential operator. In fact, it is a smooth map 
$$\grad f: \Gamma^{2,\alpha}(\odot^2 T^*M \oplus \Sigma_g M) \to \Gamma^{\alpha}(\odot^2 T^*M \oplus \Sigma_g M).$$
On any bounded $C^{2,\alpha}$ neighborhood $U$ of $0$ we have
$$\|\grad f(x) - \grad f(y)\|_{L^2} \leq C \|x - y\|_{H^2}.$$
This is a simple consequence of the fact that $Q(g,\varphi)$ can be locally represented as a polynomial expression
in the coordinate expressions of $g$ and $\varphi$ and their first and second derivatives. In a bounded $C^{2,\alpha}$ neighborhood we then estimate terms as needed to get an expression which is bounded by $\|(g,\varphi)\|_{H^2}$.
This concludes the argument for conditions 1,2 and 3.

Since $DQ(\bar{\Phi})$ is symmetric (by \cite{Ammann2015}), so is $L$. Since $L$ is a linear second order differential operator, it induces continuous maps $C^{2,\alpha} \to C^{\alpha}$ and $H^2 \to L^2$.
It remains to be shown that $L$ is Fredholm.
To see this, remember that we have a splitting
$$T_{\bar{\Phi}} \mc{N} = \ker \lambda_{\bar{\Phi}} \oplus \im \lambda^*_{\bar{\Phi}}.$$
With respect to these operators, we know the two identities
$$DQ(\bar{\Phi}) \circ \lambda^*_{\bar{\Phi}} = 0 \text{ and } \lambda_{\bar{\Phi}} \circ DQ(\bar{\Phi}) = 0,$$
both of which reflect diffeomorphism invariance of $Q$.
Moreover, we introduced the perturbed gradient $\tilde{Q}_{\bar{\Phi}}$, which we know is strongly elliptic and thus its linearization is Fredholm. Its linearization is also symmetric.
Thus we conclude that $DQ(\bar{\Phi})$ has the form
$$\bordermatrix{
                      & \ker \lambda_{\bar{\Phi}} & \im \lambda_{\bar{\Phi}}^* \cr
\ker \lambda_{\bar{\Phi}} & P                   & 0                      \cr
\im \lambda_{\bar{\Phi}}^* & 0                   & 0                      \cr
},$$
whereas $D\tilde{Q}_{\bar{\Phi}}(\bar{\Phi})$ has the form 
$$\bordermatrix{
                      & \ker \lambda_{\bar{\Phi}} & \im \lambda_{\bar{\Phi}}^* \cr
\ker \lambda_{\bar{\Phi}} & P                   & 0                      \cr
\im \lambda_{\bar{\Phi}}^* & 0                   & R                      \cr
}.$$
Since $D\tilde{Q}_{\bar{\Phi}}(\bar{\Phi})$ is Fredholm, so is $P = \pi \circ DQ(\Phi) \circ \iota$, where $\pi: T_{\bar{\Phi}} \mc{N} \to \ker \lambda_{\bar{\Phi}}$ denotes the orthogonal projection.
We compute
$$D\grad f (0) = D(\Xi \circ \iota)(0)^* DQ(\Xi \circ \iota(x)) = \pi \circ D\Xi(0)^* DQ(\bar{\Phi}).$$
Since the domain is restricted to $\ker \lambda_{\bar{\Phi}}$ and $D\Xi(0) = \id$, we conclude that
$$D \grad f(0) = P,$$
and hence $L = D\grad f(0)$ is Fredholm as required. Thus we have checked all conditions in theorem \ref{CML}, and the inequality holds.
\end{proof}

\begin{proof}[Proof of proposition \ref{LIP}]
What remains to be shown is that the inequality
$$|f(x)| \leq \|\grad f(x)\|_{L^2}^2$$
implies the inequality
$$|\mc{E}(\Phi)| \leq C \|Q(\Phi)\|^2_{L^2}.$$
First, by the slice theorem there exists a $C^{k+1,\alpha}$ neighborhood $U$ of $\bar{\Phi}$, such that for any $\Phi \in U$ there exists a diffeomorphism $f: M\to M$, such that
$$\lambda_{\bar{\Phi}}(\Xi^{-1}(F_* \Phi)) = 0.$$
Since
$$\mc{E}(F_* \Phi) = \mc{E}(\Phi), \quad F_* Q(\Phi) = Q(F_* \Phi)$$
and since the $L^2$ metric is diffeomorphism invariant, we can assume that $\Phi$ lies in the slice, i.e. $\lambda_{\bar{\Phi}}(\Xi^{-1}(\Phi)) = 0$. Then we have
$$|\mc{E}(\Phi)| = f(\Xi^{-1}(\Phi)) \leq \|\grad f(\Xi^{-1}(\Phi))\|_{L^2}^2.$$
Hence we must show
$$\|\grad f (\Xi^{-1}(\Phi))\|_{L^2}^2 \leq \|Q(\Phi)\|_{L^2}.$$
First note that the metric on $\ker \lambda_{\bar{\Phi}}$ is the metric induced by $\bar{\Phi}$. By making the neighborhood smaller if necessary, we can assume that all $L^2$ metrics in that neighborhood are uniformly equivalent. We have
$$\grad f(\Xi^{-1}(\Phi)) = D(\Xi \circ \iota) (\Xi^{-1}(\Phi))^* Q(\Phi).$$
Since $D(\Xi \circ \iota)$ is clearly Lipschitz, we obtain our estimate. This concludes the proof of the Łojasiewicz inequality in this case.
\end{proof}
\begin{proof}[Proof of proposition \ref{LIVC}]
For the purposes of the following discussion, read the spaces of smooth mappings as the spaces of $C^{2,\alpha}$ mappings, so that they are Banach spaces or Banach manifolds.
By the analytic regular value theorem, we can find an analytic parametrization of
$$\left\{ g \in \Gamma(\odot^2_+ T^*M) : \int_M \vol_g = 1\right\}$$
by
$$\left\{ h \in \Gamma(\odot^2_+ T^* M) : \int_M \tr_g h \vol_g = 0\right\}.$$
(For a treatment of the implicit function theorem in the analytic category on Banach spaces, take for example \cite{Hajek14}, theorem 174.)
We combine this parametrization with $\Xi_{g,\varphi}$ to obtain an analytic parametrization
$$\Psi: U \subset V_0 \to \mc{N}^1$$
where
$$V_0 = \left\{(h, \psi) \in \ker \lambda_{g,\varphi} : \int_M \tr_g h \vol_g = 0\right\}$$
and
$$\mc{N}^1 = \left\{\Phi = (g,\varphi) \in \mc{N} : \int_M \vol_g = 1\right\}.$$
Define $f = \mc{E} \circ \iota \circ \Psi$, with $\iota: \mc{N}^1 \to \mc{N}$ the inclusion.
Then $f$ fulfills the conditions of theorem \ref{CML}, which can be shown as in the previous lemma. 
Applying the theorem, we thus obtain
$$|f(x) - f(0)| \leq \|\grad f (x)\|_{L^2}^{\theta},$$
where $\theta \in (1,2)$. If the critical set is a manifold near $\bar{\Phi}$, we use the optimal version theorem of theorem \ref{CML} and obtain
$$|f(x) - f(0)| \leq C \|\grad f(x)\|_{L^2}^2$$
for some $C > 0$.
 What remains to be shown is
$$\|\grad f(\Psi^{-1}(\Phi))\|_{L^2} \leq C \|\mathring{Q}(\Phi)\|_{L^2}.$$
As in the previous proposition, we compute
$$\grad f(\Psi^{-1}(\Phi)) = (D\Psi )^*(\Psi^{-1}(\Phi)) \grad (\mc{E} \circ \iota)(\Phi).$$
Then the claim follows, since, on the one hand, $D\Psi$ is Lipschitz by the regular value theorem, and on the other hand
$$\grad(\mc{E} \circ \iota)(\Phi) = D\iota(\Phi)^* \grad \mc{E}(\Phi) = D\iota(\Phi)^* Q(\Phi).$$
Since $D\iota(\Phi)^* : T_{\Phi} \mc{N} \to T_{\Phi} \mc{N}^1$ is the orthogonal projection,
this implies
$$\grad(\mc{E} \circ \iota)(\Phi) = \mathring{Q}(\Phi).$$
\end{proof}

\begin{thm}[Energy decay]
\label{L2cv}
Suppose $M$ is a compact manifold.
\begin{enumerate}
\item Suppose $\bar{\Phi}$ is a critical point of $\mc{E}$. Then there exists a $C^{2,\alpha}$ neigborhood $U$ of $\bar{\Phi}$, such that for any $\Phi \in U$ the following inequalities hold
$$\mc{E}(\Phi_t) \leq C e^{-\alpha t},$$
$$\int_T^{\infty} \|Q(\Phi_t)\|^2_{L^2} dt \leq C e^{-\alpha T},$$
and
$$\int_T^{\infty} \|Q(\Phi_t)\|_{L^2} dt \leq C e^{-\alpha T},$$
where $C, \alpha>0$ and $\Phi_t$ is the solution of
$$\partial_t \Phi_t = Q(\Phi_t), \Phi_0 = \Phi.$$
\item Suppose $\bar{\Phi}$ is a volume constrained minimizer of $\mc{E}$. Then there exists a $C^{2,\alpha}$ neighborhood $U$ of $\bar{\Phi}$, such that for any $\Phi \in U$ it holds
$$|\mc{E}(\Phi_t) - \mc{E}(\bar{\Phi})| \leq \frac{C}{1+T^{\beta}},$$
$$\int_T^{\infty} \|\mathring{Q}(\Phi_t)\|^2_{L^2} dt \leq \frac{C}{1 + T^{\beta}}$$
and
$$\int_T^{\infty} \|\mathring{Q}(\Phi_t)\|_{L^2} dt \leq \frac{C}{1+T^{\gamma}},$$
for some $C, \beta > 1$. If the set of volume constrained critical sets is a manifold near $\bar{\Phi}$, we can instead choose exponential bounds as in the first case. Here we assume $\Phi_t$ is the volume normalized spinor flow with initial condition $\Phi_0 = \Phi \in U$.
\end{enumerate}
The integrals are to be read as the integral from $T$ to the maximal time of existence in the neighborhood $U$. The constants $C, \alpha, \beta$ only depend on the constants $C$ and $\theta$ in the Łojasiewicz inequalities.
\end{thm}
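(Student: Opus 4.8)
The plan is to run the classical Łojasiewicz--Simon scheme: differentiate $\mc{E}$ along the flow, feed in the Łojasiewicz inequalities of Propositions \ref{LIP} and \ref{LIVC}, and integrate the resulting differential inequalities. Shrinking $U$ if necessary, we may assume $U$ lies inside the $C^{2,\alpha}$ neighborhood on which the relevant Łojasiewicz inequality holds, and we let $[0,T_{\max})$ be the maximal interval on which $\Phi_t$ exists and stays in $U$; this is the interval over which all integrals in the statement are taken. Put $e(t) = \mc{E}(\Phi_t)$ in case (1) and $e(t) = \mc{E}(\Phi_t) - \mc{E}(\bar{\Phi})$ in case (2). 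Since $\Phi_t$ is the negative $L^2$-gradient flow of $\mc{E}$ (resp.\ of $\mc{E}|_{\mc{N}^1}$, as established for the volume normalized flow above), we have $\dot{e}(t) = -\|Q(\Phi_t)\|_{L^2}^2$ (resp.\ $\dot{e}(t) = -\|\mathring{Q}(\Phi_t)\|_{L^2}^2$). In particular $e$ is nonincreasing and $C^1$ in $t$ (the flow is smooth, differing from the strongly parabolic gauged spinor flow by diffeomorphisms), and $e \geq 0$ on $[0,T_{\max})$ since $\mc{E} \geq 0 = \mc{E}(\bar{\Phi})$ in case (1) and $\mc{E} \geq \mc{E}(\bar{\Phi})$ near the volume constrained minimizer $\bar{\Phi}$ in case (2). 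We may assume $e(0) > 0$, else the flow is stationary; and if $e$ reaches $0$ at a first time $t_1 < T_{\max}$ it remains $0$ (so $\dot e = 0$, hence $\mathring Q(\Phi_t) = 0$) afterward, so it suffices to argue on $[0,t_1)$ and take limits.

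\emph{Exponential case} (covering (1), and the sub-case of (2) in which the critical set near $\bar{\Phi}$ is a manifold). The optimal inequality reads $e \leq C_0\,\|Q(\Phi_t)\|_{L^2}^2 = -C_0\,\dot{e}$, so $\dot{e} \leq -C_0^{-1}e$ and Gronwall gives $e(t) \leq e(0)\,e^{-t/C_0}$, the first estimate with $\alpha = C_0^{-1}$. The second follows from $\int_T^{T_{\max}}\|Q(\Phi_t)\|_{L^2}^2\,dt = e(T) - \lim_{t\uparrow T_{\max}}e(t) \leq e(T)$ and the exponential decay. For the third, differentiate $\sqrt{e}$: wherever $e>0$,
$$-\frac{d}{dt}\sqrt{e(t)} = \frac{\|Q(\Phi_t)\|_{L^2}^2}{2\sqrt{e(t)}} \geq \frac{1}{2\sqrt{C_0}}\,\|Q(\Phi_t)\|_{L^2}$$
by the Łojasiewicz inequality, hence $\int_T^{T_{\max}}\|Q(\Phi_t)\|_{L^2}\,dt \leq 2\sqrt{C_0}\,\sqrt{e(T)} \leq C e^{-\alpha T}$ after adjusting $C$ and $\alpha$. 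The identical computation with $\mathring{Q}$ in place of $Q$ settles the manifold sub-case of (2).

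\emph{Polynomial case} (the general situation in (2)). Now $e \leq \|\mathring{Q}(\Phi_t)\|_{L^2}^\theta$ with $\theta \in (1,2)$, equivalently $e^{2/\theta} \leq -\dot{e}$. Setting $p = 2/\theta \in (1,2)$ and comparing with $\dot{y} = -y^p$ --- divide by $e^p$ where $e > 0$ and integrate the resulting $\tfrac{d}{dt}e^{1-p} \geq p-1$ --- gives $e(t) \leq \big((p-1)t\big)^{-1/(p-1)}$, which after absorbing small $t$ into the constant reads $e(t) \leq C(1+t^\beta)^{-1}$ with $\beta = 1/(p-1) = \theta/(2-\theta) > 1$. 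The second estimate again follows from $\int_T^{T_{\max}}\|\mathring{Q}(\Phi_t)\|_{L^2}^2\,dt \leq e(T)$. For the third, differentiate $e^{1-1/\theta}$ and use $\|\mathring{Q}(\Phi_t)\|_{L^2}^2 \geq e(t)^{1/\theta}\,\|\mathring{Q}(\Phi_t)\|_{L^2}$ (from the Łojasiewicz inequality):
$$-\frac{d}{dt}e(t)^{1-1/\theta} = \Big(1-\tfrac{1}{\theta}\Big)\,e(t)^{-1/\theta}\,\|\mathring{Q}(\Phi_t)\|_{L^2}^2 \geq \Big(1-\tfrac{1}{\theta}\Big)\,\|\mathring{Q}(\Phi_t)\|_{L^2},$$
so $\int_T^{T_{\max}}\|\mathring{Q}(\Phi_t)\|_{L^2}\,dt \leq \tfrac{\theta}{\theta-1}\,e(T)^{(\theta-1)/\theta} \leq C(1+T^\gamma)^{-1}$ with $\gamma = \beta(\theta-1)/\theta = (\theta-1)/(2-\theta) > 0$.

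There is no serious obstacle: the real content sits in Propositions \ref{LIP} and \ref{LIVC}, and what remains is the classical ODE manipulation above. The points that genuinely require attention are organizational --- keeping the differential inequalities confined to the maximal interval $[0,T_{\max})$ on which $\Phi_t$ remains in the Łojasiewicz neighborhood (which is exactly how the integrals in the statement are to be read), using the minimizer hypothesis in case (2) to ensure $e \geq 0$, and checking that the exponents come out as $\beta = \theta/(2-\theta)$ and $\gamma = (\theta-1)/(2-\theta)$. All decay rates then depend only on the Łojasiewicz constant $C_0$ and exponent $\theta$; the prefactors depend in addition only on the fixed neighborhood $U$.
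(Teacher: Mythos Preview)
Your proof is correct and follows essentially the same approach as the paper: differentiate the energy along the flow, feed in the Łojasiewicz inequalities to obtain a differential inequality for $e(t)$, integrate (Gronwall in the optimal case, the ODE $\dot y=-y^{2/\theta}$ in the general case), and then differentiate $e^{1-1/\theta}$ (which is $\sqrt{e}$ when $\theta=2$) to bound $\int\|Q\|_{L^2}$. Your exponents $\beta=\theta/(2-\theta)$ and $\gamma=(\theta-1)/(2-\theta)$ match the paper's, and your handling of the interval $[0,T_{\max})$ and the minimizer hypothesis in case (2) is in fact a bit more explicit than the paper's own write-up.
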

{\em Remark. } The constants $\beta$ and $\gamma$ can be computed from the constant $\theta$ in the Łojasiewicz inequality as $\beta = \frac{\theta }{2 - \theta}$ and $\gamma = \frac{\theta - 1}{2 - \theta}$. As $\theta$ tends to $2$, $\beta$ tends to infinity, i.e. the convergence rate improves. As $\theta$ tends to $1$, $\beta$ tends to $1$, i.e. the convergence rate gets worse. Likewise, $\gamma$ tends to $\infty$ if $\theta$ tends to $2$, but $\gamma$ tends to $0$ as $\theta$ tends to $1$. 
\begin{proof}
First we note that
$$\frac{d}{dt} \mc{E}(\Phi_t) =  -\|Q(\Phi_t)\|_{L^2}^2$$
implies that the integral of the gradient over all future time is controlled by the energy at a fixed time.
Now applying the optimal Łojasiewicz inequality, we obtain 
$$\frac{d}{dt} \mc{E}(\Phi_t) \leq -\frac{1}{C} \mc{E}(\Phi_t).$$
Integrating this differential inequality, we obtain
$$\mc{E}(\Phi_t) \leq \mc{E}(\Phi_0) e^{-(1/C) t}.$$
Choosing the neighborhood so that $\mc{E}$ is bounded, we obtain the desired inequality.

For the second case consider
$$\frac{d}{dt} |\mc{E}(\Phi_t) - \mc{E}(\bar{\Phi})| = -\|Q(\Phi_t)\|_{L^2}^2 \leq -|\mc{E}(\Phi_t) - \mc{E}(\bar{\Phi})|^{2/\theta}.$$
Integrating this differential inequality, we obtain
$$|\mc{E}(\Phi_t) - \mc{E}(\bar{\Phi})| \leq \left(\frac{2}{\theta} - 1\right) \frac{1}{(C + t)^{\beta}}$$
where $\beta = \frac{1}{2/\theta - 1}$ and $C = |\mc{E}(\Phi_0) - \mc{E}(\bar{\Phi})|^{1-2/\theta}$.
By continuity of $\mc{E}$ we can find a lower bound for $C$ on a small neighborhood, and using this lower bound we obtain the desired inequality. The bound for the integral of $\|\mathring{Q}(\Phi_t)\|_{L^2}$ follows as above.

For the estimates of $\int_T^{\infty} \|Q(\Phi_t)\| dt$, notice that the Łojasiewicz inequality implies $\mc{E}(\Phi)^{-1/\theta} \geq C \|Q(\Phi)\|^{-1}$. (Here we actually have $\theta = 2$. The case of volume constrained minimizers is analogous with $\theta \neq 2$ in general.)
This implies
\begin{eqnarray*}
-\frac{d}{dt}\mc{E}(\Phi_t)^{1-1/\theta} & = & (1-1/\theta) \mc{E}(\Phi_t)^{-1/\theta} \|Q(\Phi_t)\|^2\\
& \geq & C \|Q(\Phi_t)\| 
\end{eqnarray*}
Hence
$$\int_T^{\infty} \|Q(\Phi_t)\|_{L^2} dt \leq C \mc{E}(\Phi_T)^{1-1/\theta}.$$
Plugging in the estimate for $\mc{E}(\Phi_T)$ then gives the desired result.
\end{proof}

\section{Mapping flow estimates}
Suppose $\Phi_t$ solves
$$\partial_t \Phi_t = Q(\Phi_t).$$
In the previous section we proved a strong estimate of the gradient along the flow in the $L^2$ norm, provided $\Phi_t$ is near a critical point. We would now like to improve this to an estimate in some higher regularity norm. Since the gradient $Q_t = Q(\Phi_t)$ satisfies the linear parabolic equation
$$\partial_t Q_t = DQ(\Phi_t) Q_t,$$
this is reasonable by parabolic regularity. Unfortunately, this equation is only weakly parabolic and hence we can not directly apply parabolic regularity. However, we recall that $\tilde{\Phi}_t = F_{t*} \Phi_t$ obeys the strongly parabolic equation
$$\partial_t \tilde{\Phi}_t = \tilde{Q}(\tilde{\Phi}_t)$$
if $f_t$ satisfies the mapping flow equation
$$\partial_t f_t = P_{g_t, g_0}(f_t), f_0 = \id_M.$$
The gauged gradient $\tilde{Q}_t = \tilde{Q}(\tilde{\Phi}_t)$ satisfies the linear strongly parabolic equation
$$\partial_t \tilde{Q}_t = D\tilde{Q}(\tilde{\Phi}_t) \tilde{Q}_t.$$
Parabolic regularity applies to $\tilde{Q}_t$, but we have no estimate of $\tilde{Q}_t$! To obtain such an estimate, we will now show how to control $\partial_t f_t$ along the mapping flow. In the next section, we will combine this estimate with the gradient estimate of the previous section to obtain an estimate of $\tilde{Q}_t$.

\begin{lemma}
\label{MFE}
Let $\tilde{g} \in \Gamma(\odot^2_+ T^* M)$ and $k > \frac{n}{2} + 2$. Suppose $\tilde{g}$ has no Killing fields.
Then there exists a $H^k$ neighborhood $U \times V$ of $(\id_M, \tilde{g})$ and constants $C, \lambda > 0$, such that for a solution $f_t$ and a metric $g_t \in V$, $g_t$ once differentiable in time, of an initial value problem
$$f_0 = \id_M$$
$$\dot{f_t} = P_{g_t, \tilde{g}}(f_t)$$
we have
$$\int_{t_1}^{t_2} \|P_{g_t, \tilde{g}}\|_{H^{-2}} dt \leq C \left( \int_0^{t_1} \|\dot{g}_t\|_{L^2} e^{\lambda (t - t_1)} dt + \int_{t_1}^{t_2} \|\dot{g}_t\|_{L^2} dt  + e^{-\lambda t_1}\right)$$
for some $C, \lambda > 0$, provided the flow exists until time $t_2$ in the neighborhood $U \times V$.
\end{lemma}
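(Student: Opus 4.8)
\emph{Strategy.} The plan is to differentiate the mapping flow equation once in time: the velocity $v_t := \dot f_t = P_{g_t,\tilde g}(f_t)$ then solves a \emph{linear}, strongly parabolic equation forced by $\dot g_t$, whose coefficient operator is a small perturbation of a negative definite elliptic operator, and the asserted estimate drops out of the Duhamel formula for this equation together with exponential decay of its evolution family.

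\emph{Setup.} First pass to the exponential chart around $\id_M$ from the analytical setup, writing $f_t \leftrightarrow X_t \in \Gamma(TM)$ with $X_0 = 0$, so that the mapping flow becomes $\dot X_t = \Psi(X_t, g_t)$ for a nonlinear second order differential operator $\Psi$. Since $\delta_{\tilde g}\tilde g = 0$, the identity map is the equilibrium of $P_{\tilde g,\tilde g}$, so $\Psi(0,\tilde g)=0$, and $A := D_X\Psi(0,\tilde g)$ equals the linearization $X\mapsto -4(\delta_{\tilde g}\delta_{\tilde g}^*X^\flat)^\sharp$ of $P_{\bar g,\bar g}$ at $\id_M$ recorded earlier. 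Because $\tilde g$ has no Killing fields, $\delta_{\tilde g}^*$ has trivial kernel, hence $A$ is a formally self-adjoint, uniformly elliptic operator of second order with $(AX,X)_{L^2} = -4\|\delta_{\tilde g}^*X^\flat\|_{L^2}^2 \le -\lambda_0\|X\|_{L^2}^2$ for some $\lambda_0>0$; in particular $\|e^{tA}\|_{H^r\to H^r}\le M_r e^{-\lambda_0 t}$ on every Sobolev space.

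\emph{The linear equation for $v_t$ and Duhamel.} Differentiating $\dot X_t = \Psi(X_t,g_t)$ in $t$ — legitimate since the strongly parabolic flow is smooth in $(x,t)$ and $\dot g_t$ exists by hypothesis — yields
$$\dot v_t = \mathcal{A}_t v_t + \mathcal{B}_t \dot g_t,\qquad \mathcal{A}_t := D_X\Psi(X_t,g_t),\quad \mathcal{B}_t := D_g\Psi(X_t,g_t),$$
where $v_t = \dot X_t$ and $v_0 = \Psi(0,g_0) = 2(\delta_{\tilde g}g_0)^\sharp$ is bounded in $H^{-2}$ uniformly over $V$. The operator $\mathcal{B}_t$ is of first order in its argument (being $D_g$ of $-df(X_{f^*\tilde g}(g))$, with $g\mapsto X_{\bar g}(g)$ first order), so $\|\mathcal{B}_t\dot g_t\|_{H^{-1}}\le C\|\dot g_t\|_{L^2}$ with $C$ uniform on $U\times V$; here $k > n/2 + 2$ is used so that $\Psi$ is a smooth map of the relevant Sobolev spaces and the coefficients are $C^1$-bounded. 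Shrinking $U\times V$ makes the second order operator $\mathcal{A}_t$ have coefficients as close to those of $A$ as desired; by perturbation theory for analytic semigroups each $\mathcal{A}_t$ is then uniformly sectorial with spectrum in $\{\operatorname{Re}z\le -\lambda\}$ for a fixed $\lambda\in(0,\lambda_0)$, and since $t\mapsto\mathcal{A}_t$ is continuous the evolution family $U(t,s)$ it generates satisfies $\|U(t,s)\|_{H^r\to H^r}\le Me^{-\lambda(t-s)}$ for each $r$. The Duhamel formula $v_t = U(t,0)v_0 + \int_0^t U(t,s)\mathcal{B}_s\dot g_s\,ds$ gives, for $t\in[t_1,t_2]$,
$$\|v_t\|_{H^{-2}} \le Ce^{-\lambda t} + C\int_0^t e^{-\lambda(t-s)}\|\dot g_s\|_{L^2}\,ds.$$
Integrating over $t\in[t_1,t_2]$, applying Fubini to the double integral and splitting the inner integral at $s=t_1$, the three resulting terms are bounded by $\frac{1}{\lambda}e^{-\lambda t_1}$, $\frac{1}{\lambda}\int_0^{t_1}\|\dot g_t\|_{L^2}e^{\lambda(t-t_1)}\,dt$ and $\frac{1}{\lambda}\int_{t_1}^{t_2}\|\dot g_t\|_{L^2}\,dt$ respectively; since $P_{g_t,\tilde g}(f_t)=\dot f_t$ corresponds to $v_t$ under the chart (with uniformly equivalent norms, as $X_t$ stays small), this is the claim.

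\emph{Main obstacle.} The genuinely non-formal step is the uniform exponential decay of the evolution family $U(t,s)$ in a negative Sobolev norm. A naive energy estimate only yields exponential growth, so one must really exploit that $A$ is \emph{strictly} negative definite — precisely where the no-Killing-fields hypothesis enters — and verify, via standard perturbation theory for parabolic evolution families, that this spectral gap survives passage to the time-dependent, non-self-adjoint family $\mathcal{A}_t$ obtained by linearizing along the flow, uniformly over the neighborhood. Everything else (the chart computation, the first order bound on $\mathcal{B}_t$, and the Fubini bookkeeping) is routine.
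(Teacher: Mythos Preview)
Your proposal is correct and reaches the same pointwise bound as the paper, but the technical route differs. You invoke abstract parabolic evolution-family machinery: shrinking $U\times V$ makes each $\mathcal{A}_t$ a small perturbation of $A$, hence uniformly sectorial with spectrum in $\{\operatorname{Re} z\le -\lambda\}$, and then Duhamel gives $\|v_t\|_{H^{-2}}\le Ce^{-\lambda t}+C\int_0^t e^{-\lambda(t-s)}\|\dot g_s\|_{L^2}\,ds$. The paper instead runs an elementary energy argument directly in $H^{-2}$: it equips $H^{-2}$ with the inner product $\langle u,v\rangle_{H^{-2}}=(L^{-1}u,L^{-1}v)_{L^2}$ (using that $L=A$ is invertible), observes that $DP_{g,\tilde g}(f)$ remains strictly negative definite in this inner product for $(f,g)$ near $(\id_M,\tilde g)$, and computes $\frac{d}{dt}\|v_t\|_{H^{-2}}^2$ using that $g\mapsto P_g(f)$ is \emph{linear}. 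This yields the scalar differential inequality $\dot b\le -\lambda b+C\|\dot g_t\|_{L^2}$ for $b(t)=\|v_t\|_{H^{-2}}$, which is then compared to the explicit solution $\beta(t)=e^{-\lambda t}b(0)+\int_0^t e^{-\lambda(t-s)}\|\dot g_s\|_{L^2}\,ds$ of the corresponding ODE---exactly your Duhamel bound. The final Fubini bookkeeping is identical in both.

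What each approach buys: the paper's energy method is self-contained and avoids the non-autonomous semigroup theory you flag as the ``main obstacle''---in fact your remark that ``a naive energy estimate only yields exponential growth'' is too pessimistic, since the negative definiteness of the linearization (from the no-Killing-fields hypothesis) survives in the $H^{-2}$ inner product and gives decay directly. Your route, on the other hand, is more systematic and would transplant unchanged to settings where $P$ is not linear in $g$, at the cost of citing perturbation theory for sectorial operators and the construction of evolution families with a spectral gap.
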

\begin{proof}
As computed in \cite{Ammann2015},
$$DP_{\tilde{g}, \tilde{g}}(\id_M) X = -4 (\delta_{\tilde{g}} \delta_{\tilde{g}}^* X^{\flat})^{\sharp}.$$
A computation of the symbol then shows that this operator is strongly elliptic. 
Furthermore, this formula implies
$$\left(DP_{\tilde{g}, \tilde{g}}(\id_M) X, X\right)_{L^2} = -4 \left(\delta^*_{\tilde{g}} X^{\flat}, \delta^*_{\tilde{g}} X^{\flat} \right) = -4 \left(\mc{L}_X \tilde{g}, \mc{L}_X \tilde{g} \right)_{L^2}.$$
Since we assume $\tilde{g}$ has no Killing fields, this implies $DP_{\tilde{g}, \tilde{g}}(\id_M)$ is strictly negative definite, i.e. there exists $\mu > 0$, such that
$$\left(DP_{\tilde{g}, \tilde{g}}(\id_M) X, X\right)_{L^2} \leq -\mu \left(X,X\right)_{L^2}.$$
Since the coefficients of the operator $P_{g_1, g_2}(f)$ are continuous in $f$ and the first derivatives of $g_1$ and $g_2$ and recalling that by the Sobolev embedding theorem $H^k$ continuously embeds in $C^2$, we conclude that there is a $H^k$ neighborhood $U$ of $\tilde{g}$, a neighborhood $V$ of $\id_M$ and a constant $0 < \lambda < \mu$, such that $DP_{g, \tilde{g}}(f)$ is strongly elliptic and strictly negative definite with a constant $\lambda$.

Since $L = DP_{\tilde{g}, \tilde{g}}(\id_M)$ is strictly negative definite, it induces an invertible operator from $H^{s+2} \to H^s$. We have, up to equivalence,
$$\|f\|_{H^{-2}} = \|L^{-1}f\|_{L^2}.$$
This implies, in particular, that $DP_{g, \tilde{g}}(f)$ is also strictly negative definite with respect to the Sobolev inner product $\langle \cdot, \cdot \rangle_{H^{-2}}$.

We will now derive a differential inequality for $\|\dot{f}_t\|_{H^{-2}}^2$, where
$$\dot{f}_t = P_{g_t, \tilde{g}}(f_t).$$
For brevity, we let $P_{g_t, \tilde{g}}(f_t) = P_{g_t}(f_t)$.
In what follows, we tacitly assume $g_t \in U$, $f_t \in V$ for all $t$, as per the statement of the lemma. 
We calculate
\begin{eqnarray*}
\frac{1}{2} \frac{d}{dt} \langle P_{g_t}(f_t), P_{g_t}(f_t) \rangle_{H^{-2}} & = &  \langle \frac{d}{dt} P_{g_t}(f_t), P_{g_t}(f_t) \rangle_{H^{-2}}\\
& = & \langle P_{\dot{g_t}}(f_t) + DP_{g_t}(f_t) \dot{f}_t, P_{g_t}(f_t) \rangle_{H^{-2}} \\
& = & \langle P_{\dot{g_t}}(f_t), P_{g_t}(f_t) \rangle + \langle DP_{g_t}(f_t) P_{g_t}(f_t), P_{g_t}(f_t) \rangle_{H^{-2}}.
\end{eqnarray*}
The map
$$g \mapsto P_g(f) = 2 df(\delta_{f^*\tilde{g}} g),$$
is a linear first order differential operator with bounds dependent on $\|f\|_{C^1}$ and $\|\tilde{g}\|_{C^1}$. As such we can estimate, using that bound and the Cauchy-Schwarz inequality
$$|\langle P_{\dot{g_t}}(f_t), P_{g_t}(f_t) \rangle_{H^{-2}}| \leq \|P_{\dot{g_t}} (f_t)\|_{H^{-2}} \|P_{g_t}(f_t)\|_{H^{-2}}  \leq  C \|\dot{g}_t\|_{L^2} \|P_{g_t}(f_t)\|_{H^{-2}}.$$
Then we obtain for
$$a(t) = \langle P_{g_t}(f_t), P_{g_t}(f_t) \rangle_{H^{-2}}$$
the inequality
$$\frac{1}{2} \dot{a}(t) \leq C \|\dot{g}_t\|_{L^2} \sqrt{a(t)} - \lambda a(t).$$
Let $b(t) = \sqrt{a(t)}$. The function $b$ then satisfies the following differential inequality
$$\dot{b}(t) \leq -\lambda b(t) + \|g_t\|_{L^2}.$$
Define
$$\beta(t) = e^{-\lambda t} \left( b(0) + \int_0^t e^{\lambda s} \|\dot{g}_s\|_{L^2} ds \right).$$
Then we have
$$\dot{\beta}(t) = -\lambda \beta(t) + \|\dot{g}_t\|_{L^2}.$$
We deduce
$$\frac{d}{dt} (b-\beta) \leq -\lambda (b-\beta),$$
and since $b(0) = \beta(0)$, $b(t) \leq \beta(t)$ follows.
To obtain the claim of the lemma, we will now estimate the integral of $\beta(t)$. For brevity, we denote $\gamma(t) = \|\dot{g}_t\|_{L^2}$. Define $\chi(s,t) = 1$ if $0 \leq s \leq t$ and $\chi(s,t) = 0$ otherwise. Then we calculate
\begin{eqnarray*}
\int_{t_1}^{t_2} e^{-\lambda t} \int_0^t e^{\lambda s} \gamma(s) ds dt
& = & \int_{t_1}^{t_2} \int_0^t e^{\lambda(s-t)} \gamma(s) ds dt \\
& = & \int_{t_1}^{t_2} \int_0^{t_2} \chi(s,t) e^{\lambda(s-t)} \gamma(s) ds dt\\
& = & \int_0^{t_2} \gamma(s) \int_{t_1}^{t_2} \chi(s,t) e^{\lambda(s-t)} dt ds\\
& = & \int_0^{t_2} \gamma(s) \int_{\max\{s, t_1\}}^{t_2} e^{\lambda(s-t)} dt ds\\
& = & \int_0^{t_1} \gamma(s) \int_{t_1}^{t_2} e^{\lambda(s-t)} dt ds + \int_{s}^{t_2} \gamma(s) \int_{t_1}^{t_2} e^{\lambda (s-t)} dt ds\\
& \leq & \lambda^{-1} \left( \int_0^{t_1} e^{\lambda (s-t_1)}\gamma(s) ds + \int_{t_1}^{t_2} \gamma(s) ds \right)   
\end{eqnarray*}
The integral of the term $b(0) e^{-\lambda t}$ is
$$\int_{t_1}^{t_2} b(0) e^{-\lambda t} dt = \lambda^{-1} b(0) \left( e^{-\lambda t_1} - e^{-\lambda t_2} \right).$$
Thus
\begin{eqnarray*}
  \int_{t_1}^{t_2} \beta(t) dt & \leq & \lambda^{-1} \left( b(0) e^{-\lambda t_1} +  \int_0^{t_1} e^{\lambda (s-t_1)}\gamma(s) ds + \int_{t_1}^{t_2} \gamma(s) ds \right) 
\end{eqnarray*}
and the claim of the lemma follows.
\end{proof}

\section{Smooth convergence of the flow}
Now everything is in place to prove stability of the spinor flow. We obtain slightly sharper theorems than in the introduction:
\begin{thm}
\label{stabP}
Suppose $\bar{\Phi} = (\bar{g}, \bar{\varphi})$ is a critical point of $\mc{E}$, such that $\bar{g}$ has no Killing fields. Then for any $k > \frac{n}{2} + 5$ there exists a $H^k$ neighborhood $U$ of $\bar{\Phi}$, such that any solution of the negative gradient flow $\Phi_t$ with initial condition $\Phi_0 = \Phi \in U$ converges in $H^k$ to a critical point. The speed of convergence is exponential.
\end{thm}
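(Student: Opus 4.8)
The plan is to pass to the gauged (spinor--DeTurck) flow, which is strongly parabolic, and to promote the $L^2$ gradient decay of Theorem~\ref{L2cv} to decay in every Sobolev norm, feeding it through the mapping flow estimate of Lemma~\ref{MFE} and the interior parabolic estimate of Corollary~\ref{PE}. Fix $k > \tfrac n2 + 5$; given $\Phi_0$ close to $\bar\Phi$ in $H^k$, let $\Phi_t=(g_t,\varphi_t)$ be the spinor flow, let $f_t$ solve the mapping flow $\dot f_t = P_{g_t,\bar g}(f_t)$, $f_0=\id_M$, and set $\tilde\Phi_t = F_{t*}\Phi_t$, so $\partial_t\tilde\Phi_t = \tilde Q(\tilde\Phi_t)$ with $\tilde Q = \tilde Q_{\bar g}$. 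First I would establish a continuity/trapping argument: as long as $\Phi_t$, $\tilde\Phi_t$, $f_t$ remain in a fixed small neighborhood $U'$ of $(\bar\Phi,\bar\Phi,\id_M)$, Theorem~\ref{L2cv} yields $\mc E(\Phi_t)\le Ce^{-\alpha t}$ and (from its proof) $\int_0^\infty\|Q(\Phi_t)\|_{L^2}\,dt\le C\,\mc E(\Phi_0)^{1/2}$, so $\Phi_t$ stays $L^2$-close to $\Phi_0$, hence to $\bar\Phi$; interpolating this closeness against the uniform higher-Sobolev bounds coming from parabolic smoothing of the strongly parabolic gauged and mapping flows (valid for $t$ bounded away from $0$ while the flow stays in $U'$) shows $\Phi_t$ stays $H^k$-close to $\bar\Phi$ with a bound that tends to $0$ with $\mc E(\Phi_0)$. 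Shrinking the initial neighborhood accordingly, the flow cannot leave $U'$ and so exists for all time inside it; shrinking $U'$ further, we may assume every metric occurring has no Killing fields.

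The heart of the proof is the following chain of estimates, valid on $[0,\infty)$. \emph{(i)} By Theorem~\ref{L2cv}, $\int_T^\infty\|Q(\Phi_t)\|_{L^2}\,dt\le Ce^{-\alpha T}$. \emph{(ii)} Since $\|\dot g_t\|_{L^2}=\|Q_1(\Phi_t)\|_{L^2}\le\|Q(\Phi_t)\|_{L^2}$, Lemma~\ref{MFE} with reference metric $\bar g$, combined with \emph{(i)}, gives $\int_T^\infty\|\dot f_t\|_{H^{-2}}\,dt\le Ce^{-\nu T}$ (each of the three terms on the right of Lemma~\ref{MFE} decays exponentially in $t_1$, using that $\int_0^\infty\|Q(\Phi_t)\|_{L^2}\,dt<\infty$). \emph{(iii)} Differentiating $\tilde\Phi_t=F_{t*}\Phi_t$ gives $\tilde Q_t = F_{t*}Q(\Phi_t)+\tilde{\mc L}_{W_t}\tilde\Phi_t$, where $W_t$ is a first-order expression in $\dot f_t$ (comparing with $\tilde Q_t = Q(\tilde\Phi_t)+\lambda^*_{\tilde\Phi_t}(X_{\bar g}(\tilde\Phi_t))$ and $F_{t*}Q(\Phi_t)=Q(\tilde\Phi_t)$ identifies $W_t$ with $X_{\bar g}(\tilde\Phi_t)$ up to sign); hence $\|\tilde Q_t\|_{H^{-2}}\le C(\|Q(\Phi_t)\|_{L^2}+\|\dot f_t\|_{H^{-1}})$, and interpolating $\|\dot f_t\|_{H^{-1}}\le C\|\dot f_t\|_{H^{-2}}^{1/2}$ against the uniform bound on $\|\dot f_t\|_{L^2}$ and summing over unit time intervals yields $\int_T^\infty\|\tilde Q_t\|_{H^{-2}}^2\,dt\le Ce^{-\nu' T}$. \emph{(iv)} The gauged gradient solves the linear strongly parabolic equation $\partial_t\tilde Q_t = D\tilde Q(\tilde\Phi_t)\tilde Q_t$, whose coefficients are smooth for $t>0$ and uniformly controlled on $U'$; Corollary~\ref{PE} then promotes \emph{(iii)} to $\|\tilde Q_t\|_{H^N}^2\le C_N\int_{t-2}^t\|\tilde Q_\tau\|_{H^{-2}}^2\,d\tau\le C_N e^{-\nu' t}$ for every $N$.

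To finish: by \emph{(iv)}, $\int_T^\infty\|\tilde Q_t\|_{H^k}\,dt\le Ce^{-cT}$, so $\tilde\Phi_t$ is $H^k$-Cauchy and converges exponentially, in fact in every $H^N$, to a limit $\tilde\Phi_\infty$. Since $\mc E(\tilde\Phi_\infty)=\lim_t\mc E(\Phi_t)=0$, $\tilde\Phi_\infty$ is an absolute minimizer, so $Q(\tilde\Phi_\infty)=0$; then $\tilde Q(\tilde\Phi_\infty)=0$ forces $\lambda^*_{\tilde\Phi_\infty}(X_{\bar g}(\tilde\Phi_\infty))=0$, and injectivity of $\lambda^*$ at a metric without Killing fields gives $X_{\bar g}(\tilde\Phi_\infty)=0$, i.e.\ $\delta_{\bar g}\tilde g_\infty=0$. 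Therefore $\|X_{\bar g}(\tilde\Phi_t)\|_{H^{k+1}}=2\,\|(\delta_{\bar g}(\tilde g_t-\tilde g_\infty))^{\sharp}\|_{H^{k+1}}\le C\|\tilde\Phi_t-\tilde\Phi_\infty\|_{H^{k+2}}\le Ce^{-ct}$, and using $Q(\Phi_t)=F_t^*Q(\tilde\Phi_t)$ with $f_t$ uniformly bounded in the relevant Sobolev norm,
$$\|Q(\Phi_t)\|_{H^k}\le C\|Q(\tilde\Phi_t)\|_{H^k}\le C\big(\|\tilde Q_t\|_{H^k}+\|X_{\bar g}(\tilde\Phi_t)\|_{H^{k+1}}\big)\le Ce^{-ct}.$$
Hence $\int_T^\infty\|Q(\Phi_t)\|_{H^k}\,dt\le Ce^{-cT}$, so $\Phi_t$ converges in $H^k$, exponentially, to some $\Phi_\infty$ with $\mc E(\Phi_\infty)=0$, i.e.\ to a pair consisting of a Ricci-flat metric and a parallel spinor. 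Running the argument with $k$ replaced by arbitrary $N$ gives smooth convergence, as in the introduction.

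The main obstacle is the trapping step together with the regularity bookkeeping around it: Theorem~\ref{L2cv}, Lemma~\ref{MFE} and Corollary~\ref{PE} all presuppose that the flow remains in a small fixed neighborhood and rely on the Sobolev embeddings and on the derivative losses occurring in steps \emph{(ii)}--\emph{(iv)} and in the $C^{2,\alpha}$ hypotheses of Proposition~\ref{LIP}, which is exactly what forces $k>\tfrac n2+5$. Closing the continuation argument requires interpolating the sharp $L^2$-closeness coming from the \L{}ojasiewicz inequality against the a priori only locally-in-time higher parabolic bounds, and checking that the resulting $H^k$-bound is arbitrarily small for initial data close enough to $\bar\Phi$. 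A further, more routine, point is that parabolic smoothing must be invoked simultaneously for the quasilinear gauged flow and for the mapping flow, whose coefficients involve $g_t=f_t^*\tilde g_t$ and are thus only as regular as $f_t$; this is handled by the standard bootstrap once $t$ is bounded away from $0$.
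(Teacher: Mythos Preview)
Your proposal follows the same core route as the paper: combine the $L^2$ gradient decay from the Łojasiewicz inequality with the mapping-flow estimate to control $\tilde Q(\tilde\Phi_t)$ in a low Sobolev norm, then promote via Corollary~\ref{PE} to $H^k$ decay of the gauged gradient, and conclude convergence. The paper packages steps (i)--(iv) as Lemma~\ref{GradientDecay}, estimating $\tilde Q$ directly in $H^{-3}$ (writing $\tilde Q(\tilde\Phi_t)=F_t^*\tilde{\mc L}_{X_t}\Phi_t+F_t^*\dot\Phi_t$ and using $\|\tilde{\mc L}_X\Phi\|_{H^s}\le C\|X\|_{H^{s+1}}$); your choice of $H^{-2}$ is what forces the interpolation detour in step~(iii), which the paper simply avoids. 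Two organisational differences are worth noting. First, for the trapping the paper does not interpolate $L^2$-closeness against parabolic-smoothing bounds; instead it invokes the uniform short-time existence Lemma~\ref{UniformExistence} and runs a clean nested-balls contradiction ($B_\gamma\subset B_{\delta/3}\subset B_\delta\subset B_\epsilon$) entirely for the \emph{gauged} flow, sidestepping the circularity you flag as the main obstacle. Second, to return from the gauged to the ungauged flow the paper upgrades the $H^{-2}$ decay of $\dot f_t$ to $H^k$ decay by parabolic regularity on the linearised mapping flow, so that $f_t$ itself converges exponentially and $\Phi_t=(F_t^{-1})^*\tilde\Phi_t$ converges by composition; your alternative via $X_{\bar g}(\tilde\Phi_\infty)=0$ is valid but still needs high-norm control of $f_t$ to pull back $Q(\tilde\Phi_t)$, so in effect you require the same smoothing step for the mapping flow that the paper uses more directly.
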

\begin{thm}
\label{stabVC}
Suppose $\bar{\Phi} = (\bar{g}, \bar{\varphi})$ is a volume constrained minimizer of $\mc{E}$ and suppose the set of critical points is a manifold near $\bar{\Phi}$. Suppose furthermore, that $\bar{g}$ has no Killing fields and $k > \frac{n}{2} + 5$. Then there exists a $H^k$ neighborhood $U$ of $\bar{\Phi}$, such that a solution of the volume constrained negative gradient flow $\Phi_t$ with initial condition $\Phi_0 = \Phi \in U$ converges in $H^k$ to a critical point. The speed of convergence is exponential.

If the critical set is not a manifold, but $\theta$ in proposition \ref{LIVC} can be chosen to be larger than $3/2$, then there exists a $H^k$ neighborhood $U$ of $\bar{\Phi}$, such that a solution of the volume constrained negative gradient flow $\Phi_t$ with initial condition $\Phi_0 = \Phi \in U$ converges in $H^k$ to a critical point. The speed of convergence is $O(T^{-\kappa})$, $\kappa = \frac{2\theta -3}{2-\theta} >0$.
\end{thm}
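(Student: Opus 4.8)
The plan is to run the argument for Theorem~\ref{stabP} \emph{mutatis mutandis}, replacing the spinor flow by the volume normalized spinor flow $\partial_t\Phi_t = \mathring{Q}(\Phi_t)$ (which coincides with the negative $L^2$-gradient flow of $\mc{E}$ on $\mc{N}^1$), replacing Proposition~\ref{LIP} by Proposition~\ref{LIVC}, and using Theorem~\ref{L2cv}(2) in place of Theorem~\ref{L2cv}(1). After rescaling we may assume $\vol_{\bar{g}}(M) = 1$, so $\bar{\Phi} \in \mc{N}^1$. I would fix a $C^{2,\alpha}$ (hence, by Sobolev, $H^k$) neighborhood $U$ of $\bar{\Phi}$ on which Proposition~\ref{LIVC} and Theorem~\ref{L2cv}(2) both apply, shrunk so that all relevant $L^2$ metrics are uniformly equivalent. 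For $\Phi_0 \in U$, as long as the flow stays in $U$, Theorem~\ref{L2cv}(2) gives $\int_T^\infty \|\mathring{Q}(\Phi_t)\|_{L^2}\,dt \leq Ce^{-\alpha T}$ in the manifold case and $\int_T^\infty \|\mathring{Q}(\Phi_t)\|_{L^2}\,dt \leq C(1+T)^{-\gamma}$, $\gamma = \frac{\theta-1}{2-\theta}$, in general; in particular $\|\Phi_t - \Phi_0\|_{L^2}$ stays small, so the flow cannot escape a small $L^2$-ball. Everything below is aimed at promoting this $L^2$-smallness to $H^k$-smallness, so that the flow genuinely remains in $U$, is therefore immortal, and converges in $H^k$.

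Since $\mathring{Q}$ is only weakly parabolic I would pass to the DeTurck gauge: let $f_t$ solve the mapping flow $\dot{f}_t = P_{g_t,\bar{g}}(f_t)$, $f_0 = \id_M$, and set $\tilde{\Phi}_t = F_{t*}\Phi_t$, which solves the strongly parabolic gauged spinor flow. The metric component of $\mathring{Q}(\Phi_t)$ equals $\dot{g}_t$, so $\|\dot{g}_t\|_{L^2} \leq \|\mathring{Q}(\Phi_t)\|_{L^2}$ and the previous paragraph controls $\int \|\dot{g}_t\|_{L^2}\,dt$. Lemma~\ref{MFE} (with $\tilde{g} = \bar{g}$) then bounds $\int_{t_1}^{t_2}\|\dot{f}_t\|_{H^{-2}}\,dt$ by a quantity inheriting that decay, and --- crucially --- the exponential damping in the underlying differential inequality $\dot{b} \leq -\lambda b + C\|\dot{g}_t\|_{L^2}$ also yields the \emph{pointwise} bound $\|\dot{f}_t\|_{H^{-2}} \lesssim (1+t)^{-\gamma}$ (resp.\ $e^{-\alpha' t}$), with no loss of rate. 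Because $\dot{f}_t$ itself solves a strongly parabolic linear equation with forcing $P_{\dot{g}_t,\bar{g}}(f_t)$, combining this with the interior estimate of Corollary~\ref{PE} (which sees only a fixed time-window and hence does not degrade the rate) upgrades the weak bound to $\|\dot{f}_t\|_{H^{k+1}} \lesssim (1+t)^{-\gamma}$ (resp.\ exponential) and shows $f_t$ stays close to $\id_M$ in a high Sobolev norm, so that $F_{t*}$ is a uniformly bounded change of chart between the $\Phi_t$- and $\tilde{\Phi}_t$-pictures.

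The gauged gradient $\tilde{Q}_t = \tilde{Q}(\tilde{\Phi}_t)$ satisfies the linear strongly parabolic equation $\partial_t\tilde{Q}_t = D\tilde{Q}(\tilde{\Phi}_t)\tilde{Q}_t$, whose coefficients are $C^\infty$ (interior parabolic regularity for the gauged flow) and uniformly elliptic while $\tilde{\Phi}_t$ stays near $\bar{\Phi}$. From the identity $\tilde{Q}_t = F_{t*}\mathring{Q}(\Phi_t) + \tilde{\mc{L}}_{\hat{W}_t}\tilde{\Phi}_t$, with $\hat{W}_t$ built from $\dot{f}_t$, and a bootstrap in the Sobolev order (using at each stage the energy decay above, Lemma~\ref{MFE}, Corollary~\ref{PE}, and the previously obtained bounds on $\dot{f}_t$), one gets $\|\mathring{Q}(\Phi_t)\|_{H^k} \leq \|\tilde{Q}_t\|_{H^k} + C\|\dot{f}_t\|_{H^{k+1}} \lesssim (1+t)^{-\gamma}$ (resp.\ exponential). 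Reinserting this into the ball argument above shows $\Phi_t$ never leaves $U$, so the flow is immortal, $\Phi_T - \Phi_\infty = -\int_T^\infty \mathring{Q}(\Phi_t)\,dt$ is $H^k$-Cauchy with $\|\Phi_T - \Phi_\infty\|_{H^k} \leq \int_T^\infty \|\mathring{Q}(\Phi_t)\|_{H^k}\,dt$, and the limit $\Phi_\infty$ satisfies $\mathring{Q}(\Phi_\infty) = 0$, i.e.\ is a volume constrained critical point. In the manifold case the right-hand side is bounded by $Ce^{-\alpha' T}$, giving exponential convergence; in general it is bounded by $C\int_T^\infty (1+t)^{-\gamma}\,dt \leq C'(1+T)^{-(\gamma-1)}$, which is the asserted rate $O(T^{-\kappa})$ with $\kappa = \gamma - 1 = \frac{2\theta-3}{2-\theta}$ --- positive precisely when $\theta > 3/2$, whence that hypothesis.

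The main obstacle, as always for a gauge-invariant geometric flow, is that no regularity can be gained directly, so everything has to be routed through the DeTurck-gauged flow; the price is that one must simultaneously control the reparametrizing diffeomorphisms $f_t$ in a high enough Sobolev norm, and show they converge, starting only from the weak $H^{-2}$ a priori bound of Lemma~\ref{MFE} together with the $L^2$ gradient decay. Carrying out the Sobolev-order bootstrap while bookkeeping the numerous small losses of derivatives --- from $\lambda^*_{\bar\Phi}$, from $F_{t*}$ as a chart change, from each application of the parabolic interior estimate --- so that nothing ever drops below the regularity Corollary~\ref{PE} requires, and checking that none of these steps degrades the decay rate below $(1+t)^{-\gamma}$, is where the real work lies; this is also roughly where the hypothesis $k > \frac{n}{2} + 5$ enters.
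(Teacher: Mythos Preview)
Your approach is essentially the paper's: reduce to the proof of Theorem~\ref{stabP}, replacing $Q$ by $\mathring{Q}$, Proposition~\ref{LIP} by Proposition~\ref{LIVC}, and Theorem~\ref{L2cv}(1) by Theorem~\ref{L2cv}(2); in the non-manifold case compute $\gamma=\frac{\theta-1}{2-\theta}$, observe $\gamma>1$ iff $\theta>3/2$ so that $\int_T^\infty (1+t)^{-\gamma}\,dt \lesssim T^{-(\gamma-1)}=T^{-\kappa}$, and identify the limit as a volume constrained critical point via $\mc{E}(\Phi_\infty)=\mc{E}(\bar\Phi)$. The ingredients (Lemma~\ref{MFE}, Corollary~\ref{PE}, Lemma~\ref{UniformExistence}, Lemma~\ref{GradientDecay}) and their roles are exactly as you describe.

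One organizational difference is worth noting. You propose to upgrade $\dot f_t$ from $H^{-2}$ to $H^{k+1}$ early, using that $\dot f_t$ solves an inhomogeneous parabolic equation with forcing $P_{\dot g_t,\bar g}(f_t)$, and then feed this into the identity for $\tilde Q_t$. But that forcing is first order in $\dot g_t$, and at that stage you only control $\dot g_t$ in $L^2$; so the upgrade of $\dot f_t$ to high Sobolev order cannot precede high-order control of $\dot g_t$ without the bootstrap you allude to. The paper sidesteps this: it combines the $L^2$ bound on $\dot\Phi_t$ and the $H^{-2}$ bound on $\dot f_t$ via the identity $\tilde Q(\tilde\Phi_t)=F_t^*\tilde{\mc L}_{X_t}\Phi_t+F_t^*\dot\Phi_t$ to get an $H^{-3}$ bound on $\tilde Q_t$ \emph{directly}, and then---because $\partial_t\tilde Q_t=D\tilde Q(\tilde\Phi_t)\tilde Q_t$ is \emph{homogeneous}---applies Corollary~\ref{PE} once to jump from $H^{-3}$ to $H^k$ with no loss of rate. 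Only afterwards, when translating back from the gauged flow to the ungauged one, does it upgrade the mapping flow. Your route works if the bootstrap is carried out carefully, but the paper's ordering avoids it entirely; this is also why the paper's proof of Theorem~\ref{stabVC} can simply say ``analogous to the previous proof'' and then handle only the arithmetic of the polynomial rate.
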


We will reduce the proof of these theorems to the following two lemmas:
\begin{lemma}[Existence near critical points]
\label{UniformExistence}
Let $\bar{\Phi}$ be a critical point of $\mc{E}$ and let $T, \epsilon > 0, k > \frac{n}{2} + 2$.
Then there exists $\delta > 0$, such that for any $\Phi$ with $\|\Phi - \bar{\Phi}\|_{H^k} < \delta$, the flow
$$\partial_t \Phi_t = \tilde{Q}(\Phi_t), \Phi_0 = \Phi$$
exists until time $T$ and $\|\Phi_T - \bar{\Phi}\|_{H^k} < \epsilon$. The same result holds for volume constrained critical points and the volume constrained flow. 
\end{lemma}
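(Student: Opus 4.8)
The plan is to prove Lemma \ref{UniformExistence} by a standard continuity/contradiction argument, using the fact that $\tilde{Q}$ is strongly parabolic so that short-time existence and uniqueness hold, together with continuous dependence on initial data in the $H^k$ topology. First I would fix the critical point $\bar{\Phi}$ and recall that $\bar{\Phi}$ is itself a stationary solution of the gauged flow $\partial_t \Phi_t = \tilde{Q}(\Phi_t)$ (since $Q(\bar{\Phi}) = 0$ and $\lambda^*_{\bar{g},\bar{\varphi}}(X_{\bar{g}}(\bar{\Phi})) = 0$ because $X_{\bar g}(\bar g) = -2(\delta_{\bar g}\bar g)^\sharp = 0$). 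Working in the chart $\Xi^{-1}_{\bar{\Phi}}$, the gauged flow becomes an honest strongly parabolic quasilinear PDE on a neighborhood of $0$ in the Banach space $\Gamma^{H^k}(\odot^2 T^*M \oplus \Sigma_{\bar g} M)$; since $k > \frac n2 + 2$, $H^k$ embeds in $C^2$ and the quasilinear coefficients are well-defined and smooth near $0$.

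The key step is the continuous dependence estimate. By the short-time existence theory for strongly parabolic equations (the energy estimate of the quoted Theorem, applied to the difference of two solutions, or equivalently the standard implicit-function-theorem setup for parabolic Cauchy problems on the time interval $[0,T]$), there is a $T_0 > 0$ and a neighborhood of $0$ on which the solution map $\Phi \mapsto (\Phi_t)_{t \in [0,T_0]}$ is defined and Lipschitz from $H^k$ into $C^0([0,T_0], H^k)$, with $\bar\Phi \mapsto \bar\Phi$ (the constant solution). Hence given $\epsilon$, by shrinking we find $\delta_0$ so that $\|\Phi - \bar\Phi\|_{H^k} < \delta_0$ forces existence on $[0,T_0]$ with $\sup_{t\le T_0}\|\Phi_t - \bar\Phi\|_{H^k} < \epsilon$. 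To reach the arbitrary prescribed time $T$, I would iterate: cover $[0,T]$ by $\lceil T/T_0\rceil$ steps of length $T_0$, at each step using the previous estimate with the (smaller) target radius needed to keep applying it, and choosing $\delta$ at the start small enough that the compounded Lipschitz constants still land inside the $\epsilon$-ball at time $T$. Because $T$ and $\epsilon$ are fixed in advance and the number of steps is finite, this is a finite composition of Lipschitz maps and the choice of $\delta > 0$ is legitimate.

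The main obstacle — and the only genuinely non-formal point — is obtaining the uniform-on-$[0,T_0]$ continuous dependence with constants that do not degenerate as the solutions are iterated; this is where one must invoke the parabolic energy estimate of the quoted Theorem for the linearized equation satisfied by the difference $\Phi_t - \Psi_t$ of two solutions, absorbing the quasilinear error terms using the $C^2$ (hence $H^k$, $k>n/2+2$) smallness of both solutions. Once that estimate is in hand the rest is bookkeeping. For the volume constrained case the argument is identical: $\mathring{Q}$ (suitably gauged as in the slice construction) is again strongly parabolic on $\mathcal{N}^1$, $\bar\Phi$ is again stationary for the gauged volume-normalized flow, and the same short-time existence plus Lipschitz dependence plus finite iteration yields the claim; I would simply remark that no new ideas are needed and the constants depend only on $\bar\Phi$, $T$, $\epsilon$, and $k$.
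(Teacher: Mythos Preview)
Your proposal is correct and is the standard argument for this kind of statement; the paper does not give its own proof but simply refers to Corollary 8.6 in \cite{Weiss2012}, whose proof follows exactly the pattern you outline (stationarity of $\bar\Phi$ for the gauged flow, strongly parabolic short-time theory with continuous dependence in $H^k$, then finite iteration over $[0,T]$). In effect you have supplied the details the paper omits by citation, so there is nothing to compare.
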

The proof is analogous to the proof of corollary 8.6 in \cite{Weiss2012}
\begin{lemma}[Decay of the gradient in a Sobolev norm]
\label{GradientDecay}
Suppose $\bar{\Phi}$ is a critical point of $\mc{E}$. Then for any $k > \frac{n}{2} + 5$ there exists a $H^k$ neighborhood $U$ of $\bar{\Phi}$, a neighborhood $V$ of $\id_M$ in $\Diff(M)$, constants $C, \alpha > 0$, such that for $\Phi \in U$ the gauged spinor flow $\tilde{\Phi}_t$ with initial condition $\Phi$ fulfills the following estimate 
\begin{equation}
\label{GEa}
\|\tilde{Q}(\tilde{\Phi}_t)\|_{H^k} \leq C e^{-\alpha T}
\end{equation}
as long as $\Phi_t$ and $f_t$ remain in the neighborhoods $U$ and $V$ respectively.

Analogously, if $\bar{\Phi}$ is a volume constrained critical point of $\mc{E}$ and the critical set near $\bar{\Phi}$ is a manifold, then for any $k > \frac{n}{2} + 5$ there exists a $H^k$ neighborhood $U$ of $\bar{\Phi}$, a neighborhood $V$ of $\id_M$ in $\Diff(M)$, constants $C, \alpha > 0$, such that for $\Phi \in U$ the volume normalized gauged spinor flow $\tilde{\Phi}_t$ with initial condition $\Phi$ fulfills the following estimate 
\begin{equation}
\label{GEb}
\|\mathring{\tilde{Q}}(\tilde{\Phi}_t)\|_{H^k} \leq C e^{-\alpha T}
\end{equation}
as long as $\Phi_t$ and $f_t$ remain in the neighborhoods $U$ and $V$ respectively. If the critical set is not a manifold we instead find $C, \beta > 0$, such that
\begin{equation}
\label{GEc}
\|\mathring{\tilde{Q}}(\tilde{\Phi}_t)\|_{H^k} \leq \frac{C}{1+ T^\beta}
\end{equation}
\end{lemma}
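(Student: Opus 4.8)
The plan is to transplant the $L^2$-decay of the gradient from Theorem \ref{L2cv} onto the gauged flow, upgrade it to an $H^k$ bound by parabolic regularity applied to the strongly parabolic gauged flow, and control the error coming from the gauge transformation via Lemma \ref{MFE}. I would work exclusively with the gauged flow $\tilde\Phi_t = F_{t*}\Phi_t$, where $f_t$ solves the mapping flow $\dot f_t = P_{g_t,\bar g}(f_t)$, $f_0 = \id_M$; this is legitimate because $\tilde\Phi_t$ solves $\partial_t\tilde\Phi_t = \tilde Q(\tilde\Phi_t)$, which is strongly parabolic, and because $\|\tilde Q(\tilde\Phi_t)\|_{L^2}$ and $\|Q(\Phi_t)\|_{L^2}$ are comparable (diffeomorphism invariance of the $L^2$ metric, together with the fact that $f_t$ stays $H^k$-close to $\id_M$ so the relevant Jacobians are uniformly bounded).

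The key steps, in order: (i) From Theorem \ref{L2cv} we have $\int_T^\infty \|Q(\Phi_t)\|_{L^2}^2\,dt \le Ce^{-\alpha T}$, hence the same bound (with adjusted constants) for $\int_T^\infty \|\tilde Q(\tilde\Phi_t)\|_{L^2}^2\,dt$, as long as the flows stay in the chosen neighborhoods. (ii) Apply the interior parabolic estimate Corollary \ref{PE} to the linear strongly parabolic equation $\partial_t \tilde Q_t = D\tilde Q(\tilde\Phi_t)\tilde Q_t$: on any unit time interval $[t-1,t]$, this gives $\|\tilde Q_t\|_{H^k}^2 \le \tilde C \int_{t-1}^{t}\|\tilde Q_\tau\|_{L^2}^2\,d\tau$, and the right-hand side is $\le \tilde C' e^{-\alpha(t-1)}$ by step (i); shrinking $\alpha$ slightly absorbs the shift, yielding $\|\tilde Q(\tilde\Phi_t)\|_{H^k} \le Ce^{-\alpha t}$. (Here I implicitly use that the coefficients of $D\tilde Q(\tilde\Phi_\tau)$ are uniformly $C^\infty$-bounded and uniformly elliptic in $\tau$ along the flow, which holds once the neighborhood $U$ is small and $k > \tfrac n2 + 5$ so $\tilde\Phi_t$ stays in $C^3$ or better.) (iii) For the volume-constrained case with the critical set a manifold, Proposition \ref{LIVC} again gives the optimal ($\theta = 2$) Łojasiewicz inequality, so Theorem \ref{L2cv}(2) gives the exponential $L^2$ bound $\int_T^\infty\|\mathring Q(\Phi_t)\|_{L^2}^2\,dt\le Ce^{-\alpha T}$, and the same argument applied to the strongly parabolic volume-normalized gauged flow yields \eqref{GEb}. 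If the critical set is not a manifold we only have the polynomial bound $\int_T^\infty\|\mathring Q(\Phi_t)\|_{L^2}^2\,dt \le C(1+T)^{-\beta}$ from Theorem \ref{L2cv}(2), and running the same interior estimate on unit intervals produces \eqref{GEc} with the same $\beta$.

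The main obstacle I anticipate is the bookkeeping between the three objects — the ungauged flow $\Phi_t$ (where the Łojasiewicz/energy decay lives), the gauged flow $\tilde\Phi_t$ (where parabolic regularity lives), and the mapping flow $f_t$ (which mediates between them). The subtlety is that Corollary \ref{PE} requires uniformly bounded elliptic coefficients for $D\tilde Q(\tilde\Phi_\tau)$ over the whole time interval, and these coefficients depend on $\tilde\Phi_\tau = (\tilde g_\tau,\tilde\varphi_\tau)$ in $C^2$ or $C^3$; a priori we only know $\tilde\Phi_\tau$ stays in a fixed $H^k$-neighborhood of $\bar\Phi$ (by hypothesis in the lemma statement), and $k > \tfrac n2 + 5$ precisely guarantees the Sobolev embedding $H^k \hookrightarrow C^3$, making the coefficients uniformly controlled. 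The other delicate point — transplanting $\|Q\|_{L^2}$ to $\|\tilde Q\|_{L^2}$ — needs that $f_t$ does not wander too far in $\Diff(M)$; this is exactly what Lemma \ref{MFE} is designed to ensure in the companion argument, and within the hypotheses of the present lemma ($f_t$ remains in the neighborhood $V$) the Jacobian factors are uniformly bounded, so the comparison is immediate. I would flag that the constants $C,\alpha$ degrade (and must be re-chosen smaller) at each of these steps, but only finitely often, so the final statement stands.
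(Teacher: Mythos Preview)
There is a genuine gap in your step (i). You assert that $\|\tilde Q(\tilde\Phi_t)\|_{L^2}$ and $\|Q(\Phi_t)\|_{L^2}$ are comparable by diffeomorphism invariance of the $L^2$ metric plus bounded Jacobians. But $\tilde Q$ is not the pushforward of $Q$: differentiating $\tilde\Phi_t = F_t^*\Phi_t$ gives
\[
\tilde Q(\tilde\Phi_t) \;=\; F_t^*\,\tilde{\mc L}_{X_t}\Phi_t \;+\; F_t^*\,Q(\Phi_t), \qquad X_t = \dot f_t.
\]
Diffeomorphism invariance controls the second summand; the first, the Lie--derivative (DeTurck) term, is not touched by your argument. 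It involves one derivative of $X_t=\dot f_t$, so bounding it in $L^2$ would need control of $\|\dot f_t\|_{H^1}$. Lemma \ref{MFE}, however, only yields decay of $\int\|\dot f_t\|_{H^{-2}}\,dt$. You invoke Lemma \ref{MFE} as guaranteeing that $f_t$ stays in $V$ so Jacobians are bounded, but that is already a hypothesis of the lemma; the actual content of Lemma \ref{MFE} is the \emph{rate} of decay of $\dot f_t$, and it is indispensable here.

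The paper resolves this mismatch by dropping to a negative Sobolev index: estimate $\|\tilde{\mc L}_{X_t}\Phi_t\|_{H^{-3}}$ by $C\|X_t\|_{H^{-2}}$ (via Sobolev multiplication, using $k>\tfrac n2+5$ so $\Phi_t\in H^{k-1}$ can absorb the products), combine with Lemma \ref{MFE} and Theorem \ref{L2cv} to get $\int_{T_1}^{T_2}\|\tilde Q(\tilde\Phi_t)\|_{H^{-3}}\,dt \le Ce^{-\mu T_1}$, convert to an $L^2$-in-time bound on $\|\tilde Q\|_{H^{-3}}^2$ using that $\|\tilde Q\|_{H^{-3}}$ is uniformly bounded, and only then apply Corollary \ref{PE} to climb from $H^{-3}$ to $H^k$. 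Your step (ii) is fine once a correct low-norm decay is in hand; the missing idea is that the low norm must be low enough ($H^{-3}$, not $L^2$) to match the $H^{-2}$ output of the mapping--flow estimate.
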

\begin{proof}[Proof of the lemma]
We start with the first case.
We will show this estimate by combining the gradient estimate from the Łojasiewicz inequality and the estimate of the mapping flow. This will give us an estimate of the time integral of $\|\tilde{Q}(\tilde{\Phi}_t)\|_{H^s}$ for $s=-3$, which we will then improve via parabolic regularity.
We consider the spinor flow
$$\partial_t \Phi_t = Q(\Phi_t), \Phi_0 = \Phi,$$
the gauged spinor flow
$$\partial_t \tilde{\Phi}_t = \tilde{Q}(\tilde{\Phi}_t), \tilde{\Phi}_0 = \Phi$$
and the mapping flow
$$\partial_t f_t = P_{g_t, \bar{g}}(f), f_0 = \id_M.$$
Then we have that
$$\tilde{\Phi}_t = F_t^* \Phi_t$$
and hence
\begin{eqnarray*}
\tilde{Q}(\tilde{\Phi}_t) & = & \partial_t (F_t^* \Phi_t) \\
& = & F_t^* \tilde{\mc{L}}_{X_t} \Phi_t + F_t^* \dot{\Phi}_t
\end{eqnarray*}
where $X_t = \frac{d}{dt} f_t$ and $\tilde{\mc{L}}$ is the spinorial Lie derivative.

Multiplication of Sobolev functions $H^k \times H^s \to H^s$ for negative $s$ and positive $k$ is continous, if $k > -s$ and $k > n/2$, where $n$ is the dimension of the manifold, see theorem 2 (i), sect. 4.4.3 in \cite{Runst96}. In particular, our choice of $k$ allows any $s \geq -3$.

We will use this to estimate $\tilde{\mc{L}}_{X_t} \Phi_t$ in the $H^s$ norm.
Recall that 
$$\tilde{\mc{L}}_X \Phi = (\mc{L}_X g, \tilde{\mc{L}}_X \varphi) = (2 \delta_g^* X^{\flat}, \nabla_X^g \varphi - \frac{1}{4} dX^{\flat} \cdot \varphi).$$
In local coordinates we have
$$\mc{L}_X g = p_1(g_{jk}, \partial_l g_{mn}, X^i) +  p_2(g_{ij}, \partial_k X^l)$$
for some polynomials $p_1, p_2$, which are linear in the partial derivative terms and the $X^i$ terms. Likewise we have
$$\tilde{\mc{L}}_X \varphi = q_1(X^i, \partial_j\varphi^{\alpha}) + q_2(g_ij, \partial_l g_{mn}, X^k, \varphi^{\alpha})$$
for polynomials $q_1, q_2$, linear in the partial derivative terms and the $X^i$ terms.
From this follows, using the multiplication theorem above and the fact that $H^{k-1}$ is a Banach algebra (since it embeds into $C^2$),
\begin{eqnarray*}
\|\tilde{\mc{L}}_X \Phi\|_{H^s} & \leq & C \left( \|DX\|_{H^s} \sum_{d=1}^r \|\Phi\|_{H^{k-1}}^d +  \|X\|_{H^s} \sum_{d=1}^r \|D\Phi\|_{H^{k-1}}^d \right)\\
& \leq & C \left( \|X\|_{H^{s+1}} \sum_{d=1}^r \|\Phi\|_{H^{k-1}}^d + \|X\|_{H^s} \sum_{d=1}^r \|\Phi\|_{H^{k}}^d \right)\\
& \leq & C \left(\|X\|_{H^{s+1}} \sum_{d=1}^r \|\Phi\|_{H^{k}}^d\right)
\end{eqnarray*}
for $k > -s + n/2 + 2$, where $r$ is the maximal degree of the polynomials $p_1, p_2, q_1, q_2$.
Since we will choose $s=-3$ and $k > n/2 + 5$, this will be the case.

Furthermore, given a diffeomorphism $f: M \to M$ and a lift to the topological spin structure $F: \tilde{P} \to \tilde{P}$, we have
$$F^* \Phi = \Phi \circ F,$$
where we view $\Phi$ as an equivariant map $\Phi: \tilde{P} \to \left(\widetilde{\GL_n^+} \times \Sigma_n\right)/\Spin(n)$.
Using the transformation rule, we can derive an estimate
$$\|u \circ f\|_{W^{k,p}(M)} \leq \nu(\|f\|_{C^{\max \{k, 1\}}}) \|u\|_{W^{k,p}(M)}$$
for the integral Sobolev spaces.
For real $s$, we conclude the following inequality by interpolation and duality
$$\|F^* \Phi\|_{H^s} \leq \tilde{\nu} (\|F\|_{C^{\lceil|s|\rceil}}) \|\Phi\|_{H^s},$$
where $\nu, \tilde{\nu}: [0, \infty) \to [0, \infty)$ are continuous functions.
 
In conclusion we obtain
\begin{eqnarray*}
\|\tilde{Q}(\tilde{\Phi}_t)\|_{H^s} & = & \|F_t^* \tilde{\mc{L}}_{X_t} \Phi_t + F_t^* \dot{\Phi}_t\|_{H^s} \\
& \leq & C \nu(\|F_t\|_{C^{\lceil|s|\rceil}}) ( \|X_t\|_{H^{s+1}} \|\Phi_t\|_{H^k} +  \|\dot{\Phi}_t\|_{H^s} )
\end{eqnarray*}
We will assume both $f_t$ and $\Phi_t$ to remain in a bounded $H^k$ neighborhood, thus we can estimate their norms by a constant, hence we obtain
$$\|\tilde{Q}(\tilde{\Phi}_t)\|_{H^s} \leq C (\|\dot{f}_t\|_{H^{s+1}} + \|\dot{\Phi}_t\|_{H^s}).$$

It remains to choose a neighborhood of $\bar{\Phi}$ so that we can also estimate the terms $\|\dot{f}_t\|_{H^{s+1}}$ and $\|\dot{\Phi}_t\|_{H^s}$.

By theorem \ref{L2cv} there exists a $H^k$ neighborhood $U$ of $\bar{\Phi}$, such that for any $\Phi \in U$ it holds
$$\int_T^{T_{\max}} \|Q(\Phi_t)\|_{L^2} dt \leq C e^{-\alpha T}.$$

Choose a neighborhood $U\times V_m$ of $(\id_M, \bar{g})$ such that we have the mapping flow estimate \ref{MFE}.
Choose a neighborhood $V_s$ of $\bar{\Phi}$, such that we have the $L^2$ estimate of the gradient along the spinor flow as in theorem \ref{L2cv}. We may assume that $\pi_{\Sigma}(V_s) = V_m$.
Furthermore, we choose the neighborhoods to be bounded in $H^k$.

Now choose $\Phi \in V_s$ as initial condition for the spinor and the spinor-DeTurck flow. As above we denote these flows by $\Phi_t$ and $\tilde{\Phi}_t$ respectively and by $f_t$ we mean the associated mapping flow.
We will now estimate the integral of the $H^{-3}$ norm of $\tilde{Q}(\tilde{\Phi}_t)$.
Recall that we have 
$$\int_{T_1}^{T_2} \|\dot{\Phi}_t\|_{L^2} dt \leq C e^{-\alpha T_1}$$
from theorem \ref{L2cv}. For $\dot{f_t}$ we get the estimate
$$\int_{T_1}^{T_2} \|\dot{f}_t\|_{H^{-2}} dt \leq C \left( \int_0^{T_1} \|\dot{g}_t\|_{L^2} e^{\lambda(t-T_1)} dt + \int_{T_1}^{T_2} \|\dot{g}_t\|_{L^2} dt + e^{-\lambda T_1} \right).$$
The second term can be bounded by $C e^{-\alpha T_1}$ by the previous estimate, since $\|\dot{g}_t\|_{L^2} \leq \|\dot{\Phi}_t\|_{L^2}$. The first term we decompose into
$$\int_0^{T_1/2} \|\dot{g}_t\|_{L^2} e^{\lambda(t-T_1)} dt < C e^{-\lambda T_1/2}$$
and
$$\int_{T_1/2}^{T_1} \|\dot{g}_t\|_{L^2} e^{\lambda(t-T_1)} dt  < C e^{-\alpha T_1/2}$$
again using the estimate for $\|\dot{g}_t\|$.
Thus
$$\int_{T_1}^{T_2} \|\dot{f}_t\|_{H^{-2}} dt < C e^{-\mu T}$$
for some $C>0, \mu >0$. We will use the same constants in the estimate of $\dot{g}_t$.
Putting these estimates together we obtain
\begin{eqnarray*}
\int_{T_1}^{T_2} \|\tilde{Q}(\tilde{\Phi}_t)\|_{H^{-3}} dt & \leq & C \int_{T_1}^{T_2} \|\dot{f}_t\|_{H^{-2}} + \|\dot{\Phi}_t\|_{H^{-3}} dt\\
& \leq & C e^{-\mu T_1}
\end{eqnarray*}
Since $\tilde{Q}$ is a continuous map from $H^k$ to $H^{k-2}$, because $H^k$ embeds into $C^3$, and $\tilde{\Phi}_t$ is in a bounded $H^k$ neighborhood, we obtain that $\|\tilde{Q}(\tilde{\Phi}_t)\|_{H^{-3}} \leq \tilde{C}$.
Hence we may estimate
$$\int_{T_1}^{T_2} \|\tilde{Q}(\tilde{\Phi}_t)\|_{H^{-3}}^2 dt \leq \tilde{C} \int_{T_1}^{T_2} \|\tilde{Q}(\tilde{\Phi}_t)\|_{H^{-3}} dt \leq C \tilde{C} e^{-\mu T_1}.$$

Since $\tilde{Q}_t = \tilde{Q}(\tilde{\Phi}_t)$ fulfills the linear strongly parabolic equation
$$\partial_t \tilde{Q}_t = D\tilde{Q}(\tilde{\Phi}_t) \tilde{Q}_t,$$
we may now apply the parabolic estimate \ref{PE}  to obtain
$$\|\tilde{Q}(\tilde{\Phi}_{T + \delta})\|_{H^k} \leq C e^{-\mu T} = \tilde{C} e^{-\mu (T+\delta)}.$$
(Since $\tilde{\Phi}_t$ remains in a bounded neighborhood of $\bar{\Phi}$, the parabolic inequality for $D\tilde{Q}(\tilde{\Phi}_t)$ can be chosen independent of $\tilde{\Phi}_t$.
In particular $\delta$ can be chosen independently of $T$ and $\Phi$, hence the estimate gets worse by a constant factor $e^{\mu \delta}$.)

The argument for the estimate (\ref{GEb}) is identical and for the estimate (\ref{GEc}) the argument runs in parallel until we apply the gradient estimate. Then we get the following estimate:
$$\int_{T_1}^{T_2} \|\dot{\Phi}_t\|_{L^2} dt \leq \frac{C}{1 + T^{\beta}}.$$
Similarly as above, we can estimate
$$\int_{T_1}^{T_2} \|\dot{f}_t\|_{H^{-2}} dt \leq \frac{C}{1 + T^{\beta}}.$$
Thus
$$\int_{T_1}^{T_2} \|\tilde{\mathring{Q}}(\tilde{\Phi}_t)\|_{H^{-3}} dt \leq \frac{C}{1 + T^{\beta}}$$
and hence
$$\|\tilde{Q}(\tilde{\Phi}_t)\|_{H^k} \leq \frac{C}{1 + T^{\beta}}$$
as claimed.
\end{proof}

\begin{proof}[Proof of theorem \ref{stabP}]
In the following $B_\rho$ denotes the ball of radius $\rho$ around $\bar{\Phi}$ with respect to the $H^k$ norm, and in this proof ``flow'' always refers to the gauged spinor flow. 
Using lemmas \ref{UniformExistence} and \ref{GradientDecay}, choose $0 < \gamma < \delta < \epsilon$ and $T$, such that
\begin{enumerate}[label=(\roman*)]
\item The estimate from lemma \ref{GradientDecay} holds on $B_{\epsilon}$.
\item For any $\Phi \in B_{\delta}$ the flow exists until time $1$ and stays in $B_{\epsilon}$
\item $\int_T^{\infty} C e^{-\alpha t} dt < \frac{\delta}{3}$, where $C$ and $\alpha$ as in lemma \ref{GradientDecay}
\item For any $\Phi \in B_{\gamma}$ the flow exists until time $T$ and remains in $B_{\delta/3}$.
\end{enumerate}
Now let $\Phi \in B_{\gamma}$.
Then denote by $\Phi_t$ the flow
$$\partial_t \Phi_t = \tilde{Q}(\Phi_t), \Phi_0 = \Phi.$$
Denote by $\hat{T} \in (0, \infty]$ the maximal time, such that the flow with initial condition $\Phi$ exists in $B_{\delta}$. The condition on $B_{\delta}$ ensures that $\Phi_{\hat{T}}$ exists and $\|\Phi_{\hat{T}} - \bar{\Phi}\|_{H^k} = \delta$.
On the other hand,
\begin{eqnarray*}
\|\bar{\Phi} - \Phi_{\hat{T}}\|_{H^k} & \leq & \|\bar{\Phi} - \Phi_T\|_{H^k} + \|\Phi_T - \Phi_{\hat{T}}\|_{H^k} \\
& \leq & \frac{\delta}{3} + \int_T^{\hat{T}} \|\tilde{Q}(\Phi_t)\|_{H^k} dt \\
& \leq & \frac{\delta}{3} + \int_T^{\hat{T}} C e^{-\alpha t} dt \\
& \leq & \frac{2}{3} \delta
\end{eqnarray*}
This is a contradiction and we conclude $\hat{T} = \infty$.
Additionally, 
$$\int_T^{\infty} \|\tilde{Q}(\Phi_t)\|_{H^k} dt \leq \frac{\delta}{3},$$
and we conclude that the limit
$$\Phi_{\infty} = \Phi_T + \int_T^{\infty} \tilde{Q}(\Phi_t) dt$$
exists in $H^k$ and
$$\|\Phi_{\infty} - \Phi_t\|_{H^k} \leq \int_t^{\infty} \|\tilde{Q}(\Phi_t)\|_{H^k} dt \leq C e^{-\alpha t}.$$
Since 
$$\lim_{t\to \infty} \mc{E}(\Phi_t) = 0,$$
$\Phi_{\infty}$ is a critical point.
We have shown that the gauged spinor flow converges for $\Phi \in B_{\gamma}$ to a critical point in $B_{\delta}$.
Given that the mapping flow is a strongly parabolic equation, the velocity along the flow solves a linear strongly parabolic equation and we can apply the parabolic regularity estimate and the mapping flow estimate to obtain that the mapping flow converges exponentially in any $H^k$ norm. Since the spinor flow is given by $(F_t^{-1})^* \Phi_t$, the spinor flow also converges exponentially.
\end{proof}
\begin{proof}[Proof of theorem \ref{stabVC}]
When the critical set is a manifold, the proof is entirely analogous to the previous proof. If the critical set is not a manifold, we have the weaker estimate
$$\|\tilde{Q}(\Phi_t)\|_{H^k} \leq \frac{C}{1 + T^{\gamma}}.$$
The exponent $\gamma$ can be computed from $\theta$ in the Łojasiewicz inequality as $\gamma = \frac{\theta - 1}{2 - \theta}$. Hence if $\theta > 3/2$, $\gamma > 1$. In that case we find
$$\int_T^{\infty} \frac{C}{1 + t^{\gamma}} dt \leq C\frac{1}{T^{\gamma-1}} \xrightarrow{T \to \infty} 0$$
and we can show existence and convergence of the flow as in the previous proof.
We define
$$\Phi_{\infty} = \Phi_T + \int_T^{\infty} \tilde{Q}(\Phi_t) dt$$
and using that
$$|\mc{E}(\Phi_t) - \mc{E}(\bar{\Phi})| \leq \frac{C}{1+ T^{\beta}}$$
we obtain
$$\mc{E}(\Phi_{\infty}) = \lim_{t\to \infty} \mc{E}(\Phi_t) = \mc{E}(\bar{\Phi})$$
and hence $\Phi_{\infty}$ is also a local minimum, and in particular a critical point of $\mc{E}|_{\mc{N}^1}$.
The speed of convergence is then given by $\frac{1}{T^{\gamma-1}}$.
\end{proof}
%\newpage
\bibliographystyle{plain}
\bibliography{cit}

\end{document}